\documentclass[12pt]{amsart}
\usepackage[colorlinks=true,citecolor=black,linkcolor=black,urlcolor=blue]{hyperref}
\usepackage{amsmath}
\usepackage[english,  activeacute]{babel}
\usepackage[utf8]{inputenc}
\usepackage{amssymb}
\usepackage{amsthm}
\usepackage{graphics,graphicx}
\usepackage{enumerate}
\usepackage{ytableau}
\usepackage{array}
\usepackage{bm}
\usepackage{cite}
\usepackage{a4wide}
\usepackage{color, url}
\usepackage{float}

\usepackage{comment}
\usepackage{xcolor}
\usepackage{color, url}
\usepackage{float}
\usepackage{accents}
\usepackage{pgf, tikz, forest}
\usepackage{multirow}

\usepackage{tikz}
\usetikzlibrary{decorations.pathreplacing}

\usepackage{booktabs}
\usepackage{tabularx}
\usepackage{geometry}
\theoremstyle{plain}
\newtheorem{theorem}{Theorem}[section]
\newtheorem{proposition}[theorem]{Proposition}
\newtheorem{coro}[theorem]{Corollary}
\newtheorem{lemma}[theorem]{Lemma}

\theoremstyle{definition}

\newcommand{\Z}{{\mathbb Z}}

\DeclareMathOperator{\des}{des}
\newcommand{\exc}{{\texttt{exc}}}

\newcommand{\highlightmin}[1]{%
  \begingroup
  \setlength{\fboxsep}{2pt}%
  \colorbox{red!20}{\ensuremath{#1}}%
  \endgroup
}

\usepackage{wasysym}

\DeclareMathOperator{\lv}{lv}

\newcommand{\F}{{\mathcal F}}

\DeclareMathOperator{\pk}{pk}
\DeclareMathOperator{\ret}{ret}
\DeclareMathOperator{\lev}{lev}
\DeclareMathOperator{\asc}{asc}
\DeclareMathOperator{\rlmin}{rlmin}
\DeclareMathOperator{\lmin}{lmin}
\DeclareMathOperator{\yper}{yper}
\DeclareMathOperator{\xper}{xper}
\DeclareMathOperator{\sper}{sper}
\DeclareMathOperator{\area}{area}

\usepackage[usestackEOL]{stackengine}[2013-10-15]
\def\x{\hspace{3ex}}    %BETWEEN TWO 1-DIGIT NUMBERS
\def\y{\hspace{2.45ex}}  %BETWEEN 1 AND 2 DIGIT NUMBERS
\stackMath

\title{Fibonacci Polyominoes: Refined Enumeration and Dyck-Path Bijections}

\author[J.-L. Baril]{Jean-Luc Baril}
\address{LIB, Universit\'e Bourgogne Europe,
  B.P. 47 870, 21078 Dijon Cedex France}
\email{barjl@u-bourgogne.fr}

\author[J.L. Ram\'irez]{Jos\'e L. Ram\'irez}
\address{Departamento de Matem\'aticas,  Universidad Nacional de Colombia,  Bogot\'a, Colombia}
\email{jlramirezr@unal.edu.co}

\author[S. Ram\'irez]{Samuel Ram\'irez}
\address{Departamento de Matem\'aticas,  Universidad Nacional de Colombia,  Bogot\'a, Colombia}
\email{samramirezra@unal.edu.co}

\date{\today}
\subjclass[2020]{05B50, 05A15, 05A19.}
\keywords{Polyomino; Fibonacci number; Catalan number; Dyck path.}

\begin{document}

\newcommand{\nadji}[1]{\mbox{}{\sf\color{green}[Ram\'{\i}rez: #1]}\marginpar{\color{green}\Large$*$}}

\begin{abstract}
We study Fibonacci polyominoes, a class of column-convex polyominoes whose lower boundary is a staircase path made of unit horizontal and vertical steps. We derive generating functions that enumerate these polyominoes by area and semiperimeter. The resulting formulas give refinements of the classical Fibonacci enumeration and lead to explicit expressions involving Catalan numbers. We give a bijection with labeled Dyck paths that provides combinatorial proofs of these
formulas and translates natural statistics on Fibonacci polyominoes into peaks, returns, and related statistics on Dyck paths. This yields
refinements by descents and left-to-right minima, including distributions governed by Narayana numbers. We also study consecutive columns of equal height, leading in the diagonal case to a refinement of the Catalan enumeration governed by Motzkin numbers.
\end{abstract}

\maketitle

\begin{center}
\emph{Dedicated to Elena Barcucci\footnote{On the occasion of her retirement, in recognition of her contributions to enumerative combinatorics and the study of polyominoes.}}
\end{center}

\section{Introduction}

The enumeration of polyomino classes defined by convexity and directionality constraints is a classical topic in enumerative combinatorics; see \cite{Book1} and the references therein. A \emph{polyomino} is a finite union of unit squares in the grid $\Z^2$ that is connected by edge adjacency. A polyomino is \emph{column-convex} if its intersection with any vertical line is either empty or a single contiguous segment of cells. Several statistics on polyominoes, including area and perimeter, have been studied for a variety of families; see, for example, \cite{BleBreKnop3,Bou,ManSha2}.

An important family in this setting is that of \emph{directed column-convex polyominoes}, or dcc-polyominoes. A column-convex polyomino is directed if there is a source cell from which every other cell can be reached using only north and east steps. Barcucci, Del Lungo, Fezzi, and Pinzani showed that the number of dcc-polyominoes of area $n$ is the Fibonacci number $F_{2n-1}$. These objects are also in bijection with nondecreasing Dyck paths of semilength $n$; see \cite{BarDelFezPin, DeutschP, FlorezJoseJunes}.

The polyominoes considered in the present paper form a natural subfamily of the dcc-polyominoes. A \emph{Fibonacci polyomino} is a column-convex polyomino whose lower boundary is a staircase
path made of unit horizontal and vertical steps. Equivalently, when its columns are read from left to right, the bottom cell of each column lies one unit above that of the preceding column. In particular, every Fibonacci polyomino is directed, with its lower-left cell as source. Figure~\ref{Ex1} shows a Fibonacci polyomino with five columns and area~18.

\begin{figure}[ht!]
\centering
\centering
\begin{tikzpicture}[scale=0.6, line cap=round, line join=round]
\tikzset{cell/.style={draw=black, very thick, fill=cyan!10!white}}

% Celdas
\foreach \x/\y in {0/0,0/1,0/2,0/3,0/4, 1/1, 1/2, 2/2, 2/3, 2/4, 2/5, 3/3, 3/4, 3/5, 3/6, 3/7,4/4, 4/5}{
  \draw[cell] (\x,\y) rectangle ++(1,1);
}

  \draw[very thick]
    (0,0) -- (1,0) -- (1,1) -- (2,1) --(2,2)--(3,2)--(3,3)--(4,3)--(4,4)--(5,4)--(5,5)--(6,5)--(6,6)--(7,6)--(7,7);

\end{tikzpicture}

\caption{Fibonacci polyomino of area 18.}\label{Ex1}
\end{figure}

Fibonacci polyominoes were studied by Turban \cite{Turban1,Turban2} in connection with lattice animals on a staircase. He proved that the number of Fibonacci polyominoes with $n$ cells is $F_n$, where $F_n$ denotes the $n$th Fibonacci number, and considered more general models in which the staircase steps may have arbitrary heights. Thus, within the larger class of dcc-polyominoes, the staircase condition leads to the ordinary Fibonacci enumeration $F_n$, rather than the odd-indexed Fibonacci enumeration $F_{2n-1}$.

Our aim is to refine this Fibonacci enumeration and to explain the Catalan structures that arise when perimeter statistics are taken into
account. We first derive generating functions that record area and the horizontal and vertical semiperimeters. Their specializations recover
the Fibonacci enumeration by area and lead to explicit formulas involving Catalan numbers for the distribution of the two semiperimeters.

The main combinatorial tool of the paper is a bijection between certain sequences of nonnegative integers and Dyck paths whose up steps carry
nonnegative integer labels. When applied to Fibonacci polyominoes, this correspondence gives a bijective explanation of the Catalan formulas
obtained from the generating functions. More importantly, it translates natural statistics on the sequence of column heights into classical and
refined statistics on Dyck paths. In particular, descents and left-to-right minima correspond to peaks and returns, leading to refinements involving Narayana numbers and the joint distribution of these two Dyck-path statistics. A further statistic recording consecutive columns of equal height is translated into a statistic on
certain factors of Dyck paths. In the diagonal case, this correspondence leads to a refinement of the Catalan enumeration governed by Motzkin
numbers.

A preliminary version of part of this work appeared in the proceedings of GASCom 2026 \cite{BarilRamirezRamirezVillamizarGASCom}. The present paper substantially extends that work through the labeled Dyck-path bijection and the refined distributions developed here.

The rest of the paper is organized as follows. Section~2 develops the generating-function approach and studies Fibonacci polyominoes by area
and horizontal and vertical semiperimeters. In Section~3, we construct
the bijection with labeled Dyck paths and use it to give a combinatorial
interpretation of the Catalan formulas. Section~4 exploits this
bijection to obtain refinements by descents, left-to-right minima, and
levels, including distributions involving Narayana and Motzkin
numbers. Finally, Section~5 contains concluding remarks and discusses
possible extensions to related staircase models.

\section{A Generating Function Approach to Perimeter and Area}

Let $P$ be a Fibonacci polyomino. Its \emph{area}, denoted by $\area(P)$, is the  number of cells of $P$. The \emph{perimeter} of $P$ is the number of unit edges on its boundary, and its \emph{semiperimeter}, denoted by $\sper(P)$, is half of this number. We refine the semiperimeter according to the orientation of the boundary edges. The  \emph{horizontal semiperimeter} $\xper(P)$ is half the number of horizontal boundary edges, while the \emph{vertical semiperimeter} $\yper(P)$ is half the number of vertical boundary edges. Thus, $\sper(P)=\xper(P)+\yper(P)$. For the Fibonacci polyomino in Figure~\ref{Ex1}, we have \[
\area(P)=18,\qquad
\xper(P)=5,\qquad
\yper(P)=10,\qquad
\sper(P)=15.
\]

Let $\F$ denote the set of all Fibonacci polyominoes. For $m,n,k\in\Z^{+}$, let $\F_{m,n,k}$ be the set of
Fibonacci polyominoes with $2m$ horizontal boundary edges, $2n$ vertical boundary edges, and area $k$. For $m,n\geq1$, we also set
\[
\F_{m,n}
:=
\bigcup_{k\geq1}\F_{m,n,k}.
\]
Thus, $\F_{m,n}$ is the set of Fibonacci polyominoes with horizontal semiperimeter $m$ and vertical semiperimeter $n$, with no
restriction on the area. Consequently,
\[
\F
=
\bigcup_{m,n\geq1}\F_{m,n}
=
\bigcup_{m,n,k\geq1}\F_{m,n,k}.
\]

We consider the generating function
\[
F(x,y,q)=\sum_{P\in \F}x^{\xper(P)}y^{\yper(P)}q^{\area(P)}=\sum_{m,n,k\geq 1} \sum_{P\in \F_{m,n,k}}x^{m}y^{n}q^{k}.
\]
For $h\geq 1$, let $F_h(x,y,q)$ be the generating function for Fibonacci polyominoes whose first column has height $h$. Thus,  \[
F(x,y,q)=\sum_{h\geq 1} F_h(x,y,q).
\]
To keep track of the height of the first column, introduce a catalytic variable $s$ and define
\[
F(x,y,q;s)
=
\sum_{h\geq 1}F_h(x,y,q)s^h.
\]
In particular, $F(x,y,q;1)=F(x,y,q)$. For brevity, we write $F(s)$ for $F(x,y,q;s)$ and $F(1)$ for $F(x,y,q)$.

We next derive a functional equation for $F(s)$ and use it to determine $F(1)$.

\begin{theorem}\label{pol:areaper}
The generating function for nonempty Fibonacci polyominoes, counted by horizontal semiperimeter, vertical semiperimeter, and area, is
\[
F(x,y,q)
=
\frac{
\sum_{\ell\geq 0}
\frac{(xy)^{\ell+1}q^{(\ell+1)^2}(y-1)^\ell}
     {1-yq^{\ell+1}}
\prod_{i=0}^{\ell-1}
\frac{1}{(1-yq^{i+1})(1-q^{i+1})}
}{
1-
\sum_{\ell\geq 0}
\frac{(xy)^{\ell+1}q^{(\ell+1)(\ell+2)}(y-1)^\ell}
     {1-q^{\ell+1}}
\prod_{i=0}^{\ell-1}
\frac{1}{(1-yq^{i+1})(1-q^{i+1})}
}.
\]
\end{theorem}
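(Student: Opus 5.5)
We would encode a Fibonacci polyomino by its sequence of column heights $(h_1,\dots,h_c)$, with $h_i\ge 1$. The first step is to determine the constraints and the statistics. Because each column starts one unit above its predecessor, edge-connectedness forces $h_i\ge 2$ for $i<c$, while $h_c\ge 1$ is free. The statistics are then
\[
\area=\textstyle\sum_i h_i,\qquad \xper=c .
\]
For $\yper$ we count vertical boundary edges. The left side of the first column gives $h_1$ edges and the right side of the last column gives $h_c$. Each bottom staircase step gives one edge. Between columns $i$ and $i+1$ the tops differ by $\bigl|h_{i+1}-(h_i-1)\bigr|$, since the top of column $i+1$ equals the top of column $i$ plus $1+h_{i+1}-h_i$.

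The main step is a first-column decomposition. Remove the first column, of height $h$, from a polyomino with $c\ge 2$ columns; what remains is a Fibonacci polyomino whose first column has height $h'$. The removal changes the number of vertical edges by
\[
h+1+|h'-h+1|-h' .
\]
This equals $2$ when $h'\ge h-1$, so $\yper$ increases by $1$. It equals $2(h-h')$ when $h'<h-1$, so $\yper$ increases by $h-h'$. This gives the functional equation
\[
F(s)=\sum_{h\ge1}x(yqs)^h+\sum_{h\ge2}xy(qs)^h\sum_{h'\ge h-1}F_{h'}+\sum_{h\ge 3}x(qs)^h\sum_{h'\le h-2}y^{h-h'}F_{h'} .
\]
We would then rewrite both double sums by exchanging the order of summation and summing the geometric series in $h$. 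The result is a linear equation expressing $F(s)$ through $F(1)$, $F(qs)$ and $F(yqs)$, with rational coefficients in $s$. Roughly, the first double sum produces terms like $\frac{xy\,qs}{1-qs}\bigl(F(1)-F(qs)\bigr)$ and the second produces terms like $\frac{x\,(yqs)^2}{1-yqs}F(1/y)$-type shifts. We would normalize these so that only substitutions $s\mapsto qs$ and $s\mapsto yqs$ appear.

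Next we would iterate this $q$-linear equation. In the differences of the two geometric contributions the $F(qs)$ and $F(yqs)$ terms combine, and we expect this recombination to be exactly where the factor $(y-1)$ arises. Each iteration shifts $s$ by one further power of $q$ and contributes a factor $\frac{1}{(1-yq^{i+1})(1-q^{i+1})}$. After $\ell$ steps the accumulated exponent of $q$ is quadratic, giving $q^{(\ell+1)^2}$ for the inhomogeneous term and $q^{(\ell+1)(\ell+2)}$ for the $F(1)$ term. Letting $\ell\to\infty$ is justified formally, since each iterate raises the minimal power of $x$. The outcome is a closed form $F(s)=A(s)+B(s)F(1)$, with $A$ and $B$ given as sums over $\ell$.

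Finally we would set $s=1$ and solve the linear equation $F(1)=A(1)+B(1)F(1)$. This gives $F(1)=A(1)/(1-B(1))$, which should match the numerator and denominator in Theorem~\ref{pol:areaper}. The main obstacle is the iteration bookkeeping. In particular, we must check that the two shifted terms really collapse into a single chain, producing $(y-1)^\ell$ rather than a binomial-type double sum. If they do not collapse directly, we would first try regrouping the sum over $h'<h-1$ as the full sum minus the complementary part, so that a single shift $s\mapsto qs$ survives with the coefficient involving $y-1$.
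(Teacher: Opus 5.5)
Your setup is the paper's own. You remove the first column, split according to whether the next column has height $h'\ge h-1$ or $h'\le h-2$, and record that $\yper$ increases by $1$ or by $h-h'$. Your edge count and your summation form of the functional equation are correct.

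The gap is in the next step, which is exactly the one that produces the factor $(y-1)^\ell$. You assert that the second double sum gives $F(1/y)$- or $F(yqs)$-type shifts, so that $F(s)$ is expressed through $F(1)$, $F(qs)$ and $F(yqs)$. That is not what happens. Write $h=h'+2+k$ with $k\ge0$. Then $(qs)^h y^{h-h'}=(qs)^{h'}(yqs)^{2+k}$, hence
\[
\sum_{h\ge3}x(qs)^h\sum_{h'\le h-2}y^{h-h'}F_{h'}
=\frac{x(yqs)^2}{1-yqs}\sum_{h'\ge1}F_{h'}(qs)^{h'}
=\frac{x(yqs)^2}{1-yqs}\,F(qs).
\]
The first double sum is $\frac{xy(qs)^2}{1-qs}\bigl(F(1)-F(qs)\bigr)$, with $(qs)^2$ rather than $qs$ in the numerator.

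This matters for your plan. Suppose two genuinely different substitutions $s\mapsto qs$ and $s\mapsto yqs$ appeared. Iterating would then produce a sum over all words in these two commuting shifts, which is precisely the binomial-type double sum you were worried about. No regrouping would collapse it to the single product in the statement. Nor can a factor $y-1$ come from ``combining'' $F(qs)$ with $F(yqs)$, since they are different series.

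The factor $y-1$ actually arises because both geometric contributions multiply the same series $F(qs)$:
\[
\frac{x(yqs)^2}{1-yqs}-\frac{xy(qs)^2}{1-qs}=\frac{xyq^2s^2(y-1)}{(1-yqs)(1-qs)}=:C(s).
\]
Set $A(s)=\frac{xyqs}{1-yqs}$ and $B(s)=\frac{xy(qs)^2}{1-qs}$. This gives $F(s)=A(s)+B(s)F(1)+C(s)F(qs)$, which is the paper's equation. Iterating, $\prod_{i=0}^{\ell-1}C(q^i)=(xy)^\ell(y-1)^\ell q^{\ell(\ell+1)}\prod_{i=0}^{\ell-1}\frac{1}{(1-yq^{i+1})(1-q^{i+1})}$. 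Multiplying by $A(q^\ell)$ and $B(q^\ell)$ yields the exponents $q^{(\ell+1)^2}$ and $q^{(\ell+1)(\ell+2)}$. Setting $s=1$ and solving then finishes as you describe. Your formal justification of the limit, that each factor $C$ carries an $x$, is fine and cleaner than the paper's analytic assumption.

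So the route is the paper's, but as written the central computation is wrong. The ``main obstacle'' you identify is an artifact of that error: once the sum is evaluated correctly there is only one shift and nothing left to collapse.
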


\begin{proof}
We first consider Fibonacci polyominoes consisting of a single column. If this column has height $h\geq 1$, then $\xper(P)=1$, $\yper(P)=h$, and $\area(P)=h$. Its contribution  is therefore $xy^hq^hs^h$. Summing over all $h\geq 1$ gives 
\[x\sum_{h\geq 1}(yqs)^{h}=\frac{xyqs}{1-yqs}.\]

Now suppose that the polyomino has at least two columns. Let $i$ and $h$ be the heights of its first and second columns, respectively. Since the lower boundary is a staircase with unit vertical steps, the second column begins one unit above the first. In particular, $i\geq 2$. There are two cases.

Case 1: $2\leq i\leq h+1$.  Start with a Fibonacci polyomino counted by $F_h(x,y,q)$ and prepend a column of height $i$; see Figure~\ref{case1th1}\,(left). The new column contributes  $(sq)^i$ to record its height and area. Prepending it increases both the horizontal and vertical semiperimeters by one, giving an additional factor  $xy$. Hence the contribution of this case is
\begin{align*}
\sum_{h\geq 1} xy \sum_{i=2}^{h+1} (sq)^{i} F_h(x,y,q)
&= xy (sq)^{2} \sum_{h\geq 1} \frac{1-(sq)^{h}}{1-sq} F_h(x,y,q)\\
&= \frac{xy (sq)^{2}}{1-sq} \big(F(1)-F(sq)\big).
\end{align*}

\begin{figure}[ht!]
\centering
\begin{tikzpicture}[scale=0.6, line cap=round, line join=round]
  % --- parámetros: 2 <= i <= h+1 ---
  \def\i{3}  % altura de la primera columna
  \def\h{5}  % altura de la segunda columna

  %----- Primera columna (altura i) -----
  \draw[fill=cyan!10!white] (-3,0) rectangle (-2,\i);
  \foreach \y in {1,...,\numexpr\i-1} { \draw[very thick] (-3,\y) -- (-2,\y); }
  \draw[very thick] (-3,0) rectangle (-2,\i);

  %----- Segunda columna (altura h) -----
  \draw[fill=white] (-2,1) rectangle (-1,\h+1);
  \foreach \y in {1,...,\numexpr\h} { \draw (-2,\y) -- (-1,\y); }
  \draw[very thick] (-2,1) rectangle (-1,\h+1);

  %----- Escalera (pasos unitarios) a la derecha -----
  \draw[very thick]
    (-3,0) -- (-2,0) -- (-2,1) -- (-1,1) -- (-1,2) -- (0,2)
    -- (0,3) -- (1,3) -- (1,4) -- (2,4) -- (2,5);

  %----- Indicador de "continúa el poliminó" a partir de la 2ª columna -----
  % Zona fantasma (placeholder) donde puede seguir el poliminó (contado por F_h)
  \draw[-,dashed, thick] (-1,2) -- (-1,7)--(0,7)--(0,6)--(2,6)--(2,5);

  %----- Marcadores de altura i y h -----
  \draw[<->] (-3.35,0) -- node[left] {$i$} (-3.35,\i);
  \draw[<->] (-1.65,1) -- node[right] {$h$} (-1.65,\h+1);
\end{tikzpicture} \hspace{2cm}
\begin{tikzpicture}[scale=0.6, line cap=round, line join=round]
  % --- parámetros: 2 <= i <= h+1 ---
  \def\i{8}  % altura de la primera columna
  \def\h{5}  % altura de la segunda columna

  %----- Primera columna (altura i) -----
  \draw[fill=cyan!10!white] (-3,0) rectangle (-2,\i);
  \foreach \y in {1,...,\numexpr\i-1} { \draw[very thick] (-3,\y) -- (-2,\y); }

  %----- Parte duplicada: del segundo al quinto cuadro -----
  \begin{scope}
    \clip (-3,1) rectangle (-2,6);
    \foreach \t in {-6,-5.9,...,6} {
      \draw[gray!70, thin] (-3+\t,1) -- (-2+\t,6);
    }
  \end{scope}

  \draw[very thick] (-3,0) rectangle (-2,\i);

  %----- Segunda columna (altura h) -----
  \draw[fill=white] (-2,1) rectangle (-1,\h+1);
  \foreach \y in {1,...,\numexpr\h} { \draw (-2,\y) -- (-1,\y); }
  \draw[very thick] (-2,1) rectangle (-1,\h+1);

  %----- Escalera (pasos unitarios) a la derecha -----
  \draw[very thick]
    (-3,0) -- (-2,0) -- (-2,1) -- (-1,1) -- (-1,2) -- (0,2)
    -- (0,3) -- (1,3) -- (1,4) -- (2,4) -- (2,5);

  %----- Indicador de "continúa el poliminó" a partir de la 2ª columna -----
  \draw[-,dashed, thick] (-1,2) -- (-1,7)--(0,7)--(0,6)--(2,6)--(2,5);

  %----- Marcadores de altura i y h -----
  \draw[<->] (-3.35,0) -- node[left] {$i$} (-3.35,\i);
  \draw[<->] (-1.65,1) -- node[right] {$h$} (-1.65,\h+1);
\end{tikzpicture}
\caption{Decomposition according to the relative heights of the first two columns: Case~1 (left) and Case~2 (right).}\label{case1th1}
\end{figure}

Case 2: $i\geq h+2$. Start with a Fibonacci polyomino whose first column has height $h$ and duplicate this column to its left; see Figure~\ref{case1th1}\,(right). Since the duplicated column contributes $h$ new cells, this operation is encoded by replacing $s$ with $sq$ in $F(s)$. It therefore contributes  $F(sq)$. We next add one cell below the duplicated column. This  increases both semiperimeters by one, giving the factor $sxyq$. Since $i\geq h+2$, at least one additional cell must be placed above the duplicated column. These cells  contribute the geometric factor $ysq/(1-ysq)$.  Thus the contribution of Case~2 is
\[
xyqs\cdot \frac{yqs}{1 - yqs} \cdot F(sq)
= F(s q)\frac{x(yqs)^{2}}{1 - yqs}.
\]
Combining the single-column case with Cases~1 and~2 gives
\begin{equation}\label{eq:functionalareaper}
F(s)
= \frac{xyqs}{1-yqs} + \frac{xy (qs)^{2}}{1-qs} F(1)
+ \left( \frac{x (yqs)^{2}}{1-yqs} - \frac{xy(qs)^{2}}{1-qs} \right) F(sq).
\end{equation}
Set
\[
A(s)= \frac{xyqs}{1-yqs},\quad
B(s)= \frac{xy (qs)^{2}}{1-qs},\quad \text{and} \quad 
C(s)= \frac{x (yqs)^{2}}{1-yqs} - \frac{xy(qs)^{2}}{1-qs}=\frac{xyq^2s^2(y-1)}{(1-syq)(1-sq)}.
\]
Equation~\eqref{eq:functionalareaper} becomes
\[
F(s)=A(s)+B(s)F(1)+C(s)F(sq).
\]
Iterating this relation  and  assuming that $|q|\leq 1$ and $|s|\leq 1$,  we obtain
\[
F(s)
= \sum_{\ell\geq 0} A(s q^{\ell}) \prod_{i=0}^{\ell-1} C(s q^{i})
+ \left( \sum_{\ell\geq 0} B(s q^{\ell}) \prod_{i=0}^{\ell-1} C(s q^{i}) \right) F(1),
\]
where an empty product is equal to $1$. Setting $s=1$ and solving for $F(1)$ gives
\[
F(1)
= \frac{\sum_{\ell\geq 0} A(q^{\ell}) \prod_{i=0}^{\ell-1} C(q^{i})}
       {1 - \sum_{\ell\geq 0} B(q^{\ell}) \prod_{i=0}^{\ell-1} C(q^{i})}.
\]
Substituting the explicit expressions for $A$, $B$, and $C$ and simplifying gives the stated formula.
\end{proof}

The first few terms in the expansion of $F(x,y,q)$ with respect to the area variable $q$ are
\begin{multline*}
    F(x,y,q)=x y q + x y^2 q^2 + (x^2 y^2 + x y^3) q^3 + (2 x^2 y^3 + 
    x y^4) q^4 \\+ \bm{(x^2 y^3 + x^3 y^3 + 2 x^2 y^4 + 
    x y^5) q^5} + (x^3 y^3 + 2 x^2 y^4 + 2 x^3 y^4 + 2 x^2 y^5 + 
    x y^6) q^6 + O(q^7).
\end{multline*}
The coefficient of $q^5$, displayed in bold, is illustrated in
Figure~\ref{fig1}.

\begin{figure}[ht!]
\centering
\begin{tikzpicture}[scale=0.5, line cap=round, line join=round]
  \draw[fill=cyan!10!white] (-3,0) rectangle (-2,5);
  \foreach \y in {1,2,3,4}
    \draw[very thick] (-3,\y) -- (-2,\y);
  \draw[very thick] (-3,0) rectangle (-2,5);
  \node at (-2.5,-0.8) {$xy^5$};
\end{tikzpicture}
\qquad
\begin{tikzpicture}[scale=0.5, line cap=round, line join=round]
  \foreach \x/\y in {0/2,0/3,1/3,0/4,1/4}{
    \draw[fill=cyan!10!white] (\x-3,\y-2) rectangle ++(1,1);
    \draw[very thick] (\x-3,\y-2) rectangle ++(1,1);
  }
  \node at (-2,-0.8) {$x^2y^3$};
\end{tikzpicture}
\qquad
\begin{tikzpicture}[scale=0.5, line cap=round, line join=round]
  \foreach \x/\y in {0/2,0/3,0/4,0/5,1/3}{
    \draw[fill=cyan!10!white] (\x-3,\y-2) rectangle ++(1,1);
    \draw[very thick] (\x-3,\y-2) rectangle ++(1,1);
  }
  \node at (-2,-0.8) {$x^2y^4$};
\end{tikzpicture}
\qquad
\begin{tikzpicture}[scale=0.5, line cap=round, line join=round]
  \foreach \x/\y in {0/2,0/3,1/3,1/4,2/4}{
    \draw[fill=cyan!10!white] (\x-3,\y-2) rectangle ++(1,1);
    \draw[very thick] (\x-3,\y-2) rectangle ++(1,1);
  }
  \node at (-1.5,-0.8) {$x^3y^3$};
\end{tikzpicture}
\qquad
\begin{tikzpicture}[scale=0.5, line cap=round, line join=round]
  \foreach \x/\y in {0/2,0/3,1/3,1/4,1/5}{
    \draw[fill=cyan!10!white] (\x-3,\y-2) rectangle ++(1,1);
    \draw[very thick] (\x-3,\y-2) rectangle ++(1,1);
  }
  \node at (-2,-0.8) {$x^2y^4$};
\end{tikzpicture}
\caption{The five Fibonacci polyominoes of area $5$, together with their semiperimeter weights.}
\label{fig1}
\end{figure}

The specialization retaining only the area and the height of the first column has a particularly simple form. Setting $x=y=1$ in
\eqref{eq:functionalareaper} gives the following result.

\begin{coro}\label{coroteo1}
The generating function for nonempty Fibonacci polyominoes, counted by area and the height of the first column, is
\[
H(q,s):=
F(1,1,q;s)
=
\frac{qs\bigl(1-q-q^2(1-s)\bigr)}
     {(1-q-q^2)(1-qs)}.
\]
Moreover, for $m\geq 0$ and $n\geq m+3$, $[q^ns^{m+2}]H(q,s)=F_{n-m-2}$, while $[q^{m+2}s^{m+2}]H(q,s)=1$.
\end{coro}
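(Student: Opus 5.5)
Setting $x=y=1$ in \eqref{eq:functionalareaper} makes $C(s)=0$, since $C$ carries the factor $(y-1)$. The functional equation therefore collapses to
\[
H(q,s)=\frac{qs}{1-qs}+\frac{(qs)^2}{1-qs}\,H(q,1).
\]
So no iteration is needed; the only unknown is $H(q,1)$. Setting $s=1$ gives $H(1)(1-q-q^2)=q$, that is, $H(q,1)=q/(1-q-q^2)$. This agrees with Turban's count $F_n$, which serves as a check. Substituting back,
\[
H(q,s)=\frac{qs}{1-qs}\Bigl(1+\frac{q^2s}{1-q-q^2}\Bigr)=\frac{qs\bigl(1-q-q^2+q^2s\bigr)}{(1-q-q^2)(1-qs)},
\]
which is the stated closed form because $1-q-q^2+q^2s=1-q-q^2(1-s)$.

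For the coefficients, I would extract $[s^{m+2}]$ from the simpler form
\[
H=\frac{qs}{1-qs}+\frac{q^3s^2}{(1-q-q^2)(1-qs)}.
\]
The first term contributes $q^{m+2}$. The second equals $\frac{q^3}{1-q-q^2}\sum_{j\ge0}q^js^{j+2}$, so it contributes $q^{m+3}/(1-q-q^2)=\sum_{k\ge1}F_kq^{m+2+k}$, using $q/(1-q-q^2)=\sum_{k\ge1}F_kq^k$ with $F_1=F_2=1$. Hence $[q^ns^{m+2}]H=F_{n-m-2}$ for $n\ge m+3$, and $[q^{m+2}s^{m+2}]H=1$, the latter coming only from the first term.

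The only point needing care is the convention $F_1=F_2=1$, which must match the paper's indexing in Turban's result. The main step, solving for $H(q,1)$, is immediate because the $F(sq)$ term vanishes.
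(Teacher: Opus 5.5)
Your proposal is correct and is essentially the paper's argument: the paper obtains the corollary simply by setting $x=y=1$ in the functional equation, where the coefficient of $F(sq)$ vanishes. Your solution $H(q,1)=q/(1-q-q^2)$ and your coefficient extraction from $\frac{qs}{1-qs}+\frac{q^3s^2}{(1-q-q^2)(1-qs)}$ supply details the paper leaves implicit, and your convention $F_1=F_2=1$ matches the paper's, which identifies $q/(1-q-q^2)$ as the generating function of the $F_n$.
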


By calculating $\partial_sH(q,s)\vert_{s=1}$, and applying
standard coefficient asymptotics \cite{Fla,Orl} yields the limiting mean height of the first column.

\begin{coro}\label{coroasympt} An asymptotic approximation for the average of the number of cells in the first column among all Fibonacci polyominoes of a given area is  $$\frac{5+\sqrt{5}}{2}.$$
\end{coro}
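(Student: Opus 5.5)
The plan is to differentiate the bivariate generating function $H(q,s)$ from Corollary~\ref{coroteo1} with respect to $s$ and set $s=1$. This gives the generating function $G(q)=\partial_sH(q,s)\vert_{s=1}$ for the total number of cells in first columns, summed over all Fibonacci polyominoes of area $n$. Dividing $[q^n]G(q)$ by $[q^n]H(q,1)$ then gives the average, and we compare the singular behaviour of the two series at their dominant singularity $\rho=(\sqrt5-1)/2$, the smallest root of $1-q-q^2$.

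First we record $H(q,1)$. Setting $s=1$ gives
\[
H(q,1)=\frac{q(1-q)}{(1-q-q^2)(1-q)}=\frac{q}{1-q-q^2},
\]
so $[q^n]H(q,1)=F_n$, as expected. Next we write $H(q,s)=\frac{qs\,N(q,s)}{(1-q-q^2)(1-qs)}$ with $N(q,s)=1-q-q^2+q^2s$. Logarithmic differentiation gives
\[
\partial_sH\big|_{s=1}=H(q,1)\Bigl(1+\frac{q^2}{1-q}+\frac{q}{1-q}\Bigr)=\frac{q}{1-q-q^2}\cdot\frac{1+q^2}{1-q},
\]
because $N(q,1)=1-q$. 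Thus $G(q)=\dfrac{q(1+q^2)}{(1-q)(1-q-q^2)}$.

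Both series have a simple pole at $\rho$, and the factor $1/(1-q)$ contributes only a pole at $1>\rho$, which is exponentially negligible. Standard meromorphic coefficient asymptotics \cite{Fla} then give
\[
\frac{[q^n]G(q)}{[q^n]H(q,1)}\longrightarrow \frac{1+\rho^2}{1-\rho}.
\]
To evaluate this constant we use $\rho^2=1-\rho$, which gives $1+\rho^2=2-\rho$, and $1-\rho=\rho^2$. The limit is therefore $(2-\rho)/\rho^2$. Since $1/\rho=\varphi=(1+\sqrt5)/2$ and $1/\rho^2=\varphi^2=\varphi+1$, this equals
\[
2\varphi^2-\varphi=2(\varphi+1)-\varphi=\varphi+2=\frac{5+\sqrt5}{2}.
\]

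The main point to get right is the differentiation step, specifically the algebra of the $s$-derivative and the simplification at $s=1$. The asymptotic step is routine once it is confirmed that $\rho$ is the unique dominant singularity and that it is a simple pole of both series. As a sanity check, a direct partial-fraction decomposition of $G$ yields an exact formula of the form $[q^n]G=\alpha F_{n+1}+\beta F_n+\gamma$, from which the same ratio follows.
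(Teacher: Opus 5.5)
Your proposal is correct and follows the paper's route: the paper also computes $\partial_sH(q,s)\vert_{s=1}$ and applies standard coefficient asymptotics at the simple pole $\rho=(\sqrt5-1)/2$. Your generating function $G(q)=q(1+q^2)/\bigl((1-q)(1-q-q^2)\bigr)$ and the limit $(1+\rho^2)/(1-\rho)=(5+\sqrt5)/2$ are both right; indeed $[q^n]G=F_{n+2}+F_n-2$, which matches the small cases $1,2,5,9,16$.
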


\subsection{The area}

We first specialize the functional equation by ignoring the vertical
semiperimeter. Setting $s=1$ and $y=1$ in
\eqref{eq:functionalareaper} gives 
\[F(x,q)=\frac{qx}{1-q}+ \frac{xq^2}{1-q}F(x,q),\]
where $F(x,q):=F(x,1,q)$. Let $a(n,m)$ denote the number of Fibonacci polyominoes of area $n$ and horizontal semiperimeter $m$, equivalently, with $m$ columns. The following result recovers Turban’s enumeration by area and number of occupied stairs. We include a direct bijective proof. 

\begin{theorem}
The generating function for nonempty Fibonacci polyominoes, counted by area and horizontal semiperimeter, is
\[
F(x,q)=\sum_{n,m\geq 1} a(n,m)\,x^{m}q^{n}
     = \frac{qx}{1 - q - q^{2}x}.
\]
Moreover, for $n, m\geq 1$, $a(n,m)=\binom{n-m}{m-1}$.
\end{theorem}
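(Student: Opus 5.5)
The plan is to derive the generating function from the specialized functional equation stated just above, and then to prove the coefficient formula both by extracting coefficients and, as the text promises, bijectively.

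\emph{The generating function.} Setting $y=1$ makes $C(s)=0$, since $C$ carries the factor $(y-1)$. With $s=1$, equation~\eqref{eq:functionalareaper} then reduces to the linear equation
\[
F(x,q)=\frac{qx}{1-q}+\frac{xq^2}{1-q}F(x,q).
\]
Solving it gives
\[
F(x,q)=\frac{qx/(1-q)}{1-xq^2/(1-q)}=\frac{qx}{1-q-q^2x}.
\]

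\emph{The coefficients.} Expand
\[
\frac{qx}{1-q(1+qx)}=\sum_{j\ge0}qx\,q^j(1+qx)^j .
\]
The term $x^mq^n$ arises from $j$ and a choice of $m-1$ factors $qx$ among the $j$ copies of $(1+qx)$. Its total $q$-degree is then $1+j+(m-1)=n$, so $j=n-m$. Hence
\[
a(n,m)=\binom{n-m}{m-1}.
\]

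\emph{Bijective proof.} A Fibonacci polyomino with $m$ columns of heights $h_1,\dots,h_m$ is valid exactly when $h_m\ge1$ and $h_j\ge2$ for $j<m$. Indeed, column $j+1$ starts one unit above column $j$, so column $j$ must reach that row to keep the polyomino connected. No further constraint is needed, because column convexity holds automatically.

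Subtract $2$ from each of $h_1,\dots,h_{m-1}$ and $1$ from $h_m$. This gives a weak composition of $n-2(m-1)-1=n-2m+1$ into $m$ nonnegative parts. The number of such compositions is
\[
\binom{n-2m+1+m-1}{m-1}=\binom{n-m}{m-1}.
\]
This matches the coefficient extraction, and it also gives the generating function directly:
\[
\sum_m x^m\left(\frac{q^2}{1-q}\right)^{m-1}\frac{q}{1-q}.
\]

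The main point to verify carefully is the connectivity characterization: each nonfinal column must have height at least $2$, and that condition alone suffices. The remaining steps are routine algebra.
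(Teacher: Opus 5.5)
Your proposal is correct and follows essentially the paper's route. You specialize the functional equation at $y=1$, $s=1$, where $C(s)$ vanishes, to get the linear equation for $F(x,q)$. Your weak composition of $n-2m+1$ into $m$ parts is exactly the paper's lattice path $N^{h_1-2}E\cdots EN^{h_m-1}$, with the east steps acting as separators; the explicit coefficient extraction and the direct product-form derivation of the generating function are routine extras the paper leaves implicit.
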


A direct bijective proof of the formula $a(n,m)=\binom{n-m}{m-1}$ is also possible. Let $P$ be a Fibonacci polyomino of area $n$ and horizontal semiperimeter $m$ (so $P$ has $m$ columns). For $1\leq i\leq m$, let $h_i$ be the height of the $i$th column. By the staircase condition,  $h_i\geq 2$ for $1\leq i\leq m-1$, and $h_m\geq 1$.  Associate with $P$ the north-east lattice path
\[N^{h_1-2}EN^{h_2-2}E \cdots N^{h_{m-1}-2}EN^{h_{m}-1},\] where $N=(0,1)$ and $E=(1,0)$. This is a path from the origin $(0,0)$ to  $(m-1,n-2m+1)$, with exactly $m-1$ east steps and $n-2m+1$ north steps. Hence the number of such paths is $\binom{(m-1)+(n-2m+1)}{m-1}=\binom{n-m}{m-1}$. This construction is a bijection.

For example, the Fibonacci polyomino in Figure~\ref{Ex1} has column heights
$(h_1,h_2,h_3,h_4,h_5)=(5,2,4,5,2)$. Its corresponding lattice path is \[N^{5-2}EN^{2-2}EN^{4-2}EN^{5-2}EN^{2-1}=N^{3}E^2N^{2}EN^{3}EN.\] 
As expected, this path ends at $(5-1,\,18-2\cdot 5+1)=(4,9)$.

Specializing $x=1$ in the previous theorem yields the generating function $q/(1-q-q^2)$ for the Fibonacci numbers, as expected. We next describe a bijection between Fibonacci polyominoes of area $n$ and compositions of $n-1$ with parts $1$ and $2$, another classical family counted by the Fibonacci numbers; see, for example, \cite{HeubachMansour}.

Let $\alpha=(\alpha_1,\dots,\alpha_r)$ be a composition of $n-1$ with
$\alpha_i\in\{1,2\}$ for all $i$. We construct a Fibonacci polyomino by
reading $\alpha$ from left to right. Begin with a single cell in the first
column and designate it as the \emph{current} cell. If $\alpha_i=1$, then
add one cell immediately above the current cell, so that we remain in the
same column, and declare this new cell to be the current one. If
$\alpha_i=2$, then add a vertical domino whose lower cell is the current
cell, move to the next column to the east, and declare the bottom cell of
this new column to be the current one. After all parts have been processed,
add one final cell, marked $\mathbf{x}$, at the last current position, using
the same rule. In this way we obtain a well-defined Fibonacci polyomino of
area $n$. Moreover, different compositions clearly produce different
polyominoes.

Conversely, let $P$ be a Fibonacci polyomino of area $n$. We recover the
composition $\alpha$ by reading the columns of $P$ from left to right. If $P$
consists of a single column of height $n$, then $\alpha=1^{n-1}$, and the
marked cell $\mathbf{x}$ is the top cell. Otherwise, each non-last column of
height $h$ contributes the block $1^{h-2}2$, while the last column of height
$h$ contributes $1^{h-1}$, followed by the terminal marker $\mathbf{x}$. This
reconstruction is unique and clearly inverts the previous construction.
Hence the correspondence is bijective. See Figure~\ref{fig2b} for an
illustration.

\begin{figure}[ht!]
\centering
\begin{tikzpicture}[scale=0.5, line cap=round, line join=round]
  \draw[fill=cyan!10!white] (-3,0) rectangle (-2,5);
  \foreach \y in {1,2,3,4} { \draw[very thick] (-3,\y) -- (-2,\y); }
  \draw[very thick] (-3,0) rectangle (-2,5);

  \node at (-2.5,0.5) {\footnotesize $1$};
  \node at (-2.5,1.5) {\footnotesize $1$};
  \node at (-2.5,2.5) {\footnotesize $1$};
  \node at (-2.5,3.5) {\footnotesize $1$};
  \node at (-2.5,4.5) {\footnotesize $\mathbf{x}$};
  \node at (-5.5,2.5) {$1111\ \leftrightarrow$};
\end{tikzpicture}
\qquad
\begin{tikzpicture}[scale=0.5, line cap=round, line join=round]
\foreach \x/\y in {0/2, 0/3, 1/3, 0/4, 1/4}{
  \draw[fill=cyan!10!white] (\x-3,\y-2) rectangle ++(1,1);
  \draw[very thick] (\x-3,\y-2) rectangle ++(1,1);
}

\node at (-2.5,0.5) {\footnotesize $1$};
\node at (-2.5,2) {$2$};
\node at (-5.5,2.5) {$121\ \leftrightarrow$};

\node at (-1.5,1.5) {\footnotesize $1$};
\node at (-1.5,2.5) {\footnotesize $\mathbf{x}$};
\end{tikzpicture}
\hspace{0.5cm}
\begin{tikzpicture}[scale=0.5, line cap=round, line join=round]
  \foreach \x/\y in {0/2, 0/3, 0/4, 0/5, 1/3}{
    \draw[fill=cyan!10!white] (\x-3,\y-2) rectangle ++(1,1);
    \draw[very thick] (\x-3,\y-2) rectangle ++(1,1);
  }

\node at (-2.5,0.5) {\footnotesize $1$};
\node at (-2.5,1.5) {\footnotesize $1$};
\node at (-2.5,3) {$2$};
\node at (-5.5,2.5) {$112\ \leftrightarrow$};

\node at (-1.5,1.5) {\footnotesize $\mathbf{x}$};
\end{tikzpicture}

\begin{tikzpicture}[scale=0.5, line cap=round, line join=round]
  \foreach \x/\y in {0/2, 0/3, 1/3, 1/4, 2/4}{
    \draw[fill=cyan!10!white] (\x-3,\y-2) rectangle ++(1,1);
    \draw[very thick] (\x-3,\y-2) rectangle ++(1,1);
  }

\node at (-2.5,1) {$2$};
\node at (-5.5,2.5) {$22\ \leftrightarrow$};

\node at (-1.5,2) {$2$};
\node at (-0.5,2.5) {\footnotesize $\mathbf{x}$};
\end{tikzpicture}
\qquad
\begin{tikzpicture}[scale=0.5, line cap=round, line join=round]
  \foreach \x/\y in {0/2, 0/3, 1/3, 1/4, 1/5}{
    \draw[fill=cyan!10!white] (\x-3,\y-2) rectangle ++(1,1);
    \draw[very thick] (\x-3,\y-2) rectangle ++(1,1);
  }

\node at (-2.5,1) {$2$};
\node at (-5.5,2.5) {$211\ \leftrightarrow$};

\node at (-1.5,1.5) {\footnotesize $1$};
\node at (-1.5,2.5) {\footnotesize $1$};
\node at (-1.5,3.5) {\footnotesize $\mathbf{x}$};
\end{tikzpicture}
\caption{The bijection between compositions of $4$ and Fibonacci polyominoes
of area $5$.}
\label{fig2b}
\end{figure}

We next study the total semiperimeter of Fibonacci polyominoes of fixed area.

\begin{coro}
Let $S_n$ be the total semiperimeter of all Fibonacci polyominoes of area $n$. Then 
\begin{align*}\sum_{n\geq 1} S_n q^n&=\frac{q (2 - 3 q + q^2 - q^4)}{(1 - q) (1 - q - q^2)^2}.\end{align*}
Moreover, for $n\geq 1$ we have
\[
S_n =  \frac{2}{5}(n+6)F_n + \frac{n}{5}F_{n+1}-1.
\]
An asymptotic approximation for the average of the semiperimeter in all Fibonacci polyominoes of a given area  is  
\[\left(\frac{5+\sqrt{5}}{10}\right)n.\]
\end{coro}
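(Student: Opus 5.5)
We specialize Theorem~\ref{pol:areaper} along the diagonal $x=y=t$ and differentiate. Set $G(t,q):=F(t,t,q)=\sum_{P\in\F}t^{\sper(P)}q^{\area(P)}$, so that
\[
\sum_{n\geq1}S_nq^n=\partial_t G(t,q)\big|_{t=1}.
\]
In the formula of Theorem~\ref{pol:areaper}, the $\ell$-th summand in both the numerator and the denominator carries the factor $(y-1)^\ell=(t-1)^\ell$. Hence the summands with $\ell\geq2$ vanish to second order at $t=1$ and contribute nothing to the value or the first derivative at $t=1$. It is therefore enough to keep the truncations
\[
N(t)=\frac{t^2q}{1-tq}+\frac{t^4q^4(t-1)}{(1-tq)^2(1-q)},\qquad
D(t)=1-\frac{t^2q^2}{1-q}-\frac{t^4q^6(t-1)}{(1-q)^2(1-tq)},
\]
and compute $\partial_t(N/D)=(N'D-ND')/D^2$ at $t=1$. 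At $t=1$ we have $N(1)=q/(1-q)$ and $D(1)=(1-q-q^2)/(1-q)$, so $N/D=q/(1-q-q^2)$, which agrees with the Fibonacci count as a check. The derivatives are immediate: the $(t-1)$-terms contribute only their coefficient evaluated at $t=1$, namely $q^4/(1-q)^3$ to $N'(1)$ and $-q^6/(1-q)^3$ to $D'(1)$, and the remaining parts are elementary. We then clear denominators and simplify. The expected result is $q(2-3q+q^2-q^4)/\bigl((1-q)(1-q-q^2)^2\bigr)$, and this algebraic simplification is the only computational step. Before doing it, I would sanity-check the first few coefficients against the expansion displayed after Theorem~\ref{pol:areaper}: they should be $S_1=2$, $S_2=3$, and $S_5=4+6+6+5+6=27$.

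For the closed form of $S_n$, decompose the rational function into partial fractions with respect to the factors $1-q$, $1-q-q^2$ and $(1-q-q^2)^2$. The simple pole at $q=1$ has residue $1/(1-1-1)^2\cdot(2-3+1-1)=-1$ after normalization, which produces the constant $-1$. The remaining part, of the form $(a+bq+\dots)/(1-q-q^2)^2$ plus terms over $1-q-q^2$, is expanded using the standard identity
\[
\sum_{n}\Bigl(\sum_{k}F_kF_{n-k}\Bigr)q^n=\frac{q^2}{(1-q-q^2)^2},\qquad \sum_{k=0}^{n}F_kF_{n-k}=\frac{nL_n-F_n}{5},
\]
together with $L_n=F_{n-1}+F_{n+1}=2F_{n+1}-F_n$. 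Rewriting everything in terms of $F_n$ and $F_{n+1}$ should give $\frac25(n+6)F_n+\frac n5F_{n+1}-1$. Rather than tracking the coefficients by hand, one can confirm them by checking the formula for $n=1,2,3,4$: since the sequence satisfies a linear recurrence of order $5$, agreement on five initial values suffices.

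For the asymptotics, divide by the number $F_n$ of polyominoes of area $n$ and use $F_{n+1}/F_n\to\varphi=(1+\sqrt5)/2$. This gives an average of $n\bigl(\tfrac25+\tfrac{\varphi}5\bigr)+O(1)=\frac{5+\sqrt5}{10}\,n+O(1)$. The main obstacle is only the bookkeeping in the derivative simplification. The key structural point, which reduces the infinite sums to two terms each, is the vanishing of the $(t-1)^\ell$ factors for $\ell\geq2$.
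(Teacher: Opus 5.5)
Your strategy is legitimate. Coefficientwise in $q$, every summand of Theorem~\ref{pol:areaper} with $\ell\geq 2$ is divisible by $(t-1)^2$ once $x=y=t$, so truncating at $\ell\leq 1$ is safe. This is the same mechanism the paper uses: it differentiates \eqref{funceccoro25} at $x=1$, where the coefficient of $F(q)$ vanishes, so it only needs $H(q,q)=q^2(1-q)/(1-q-q^2)$ from Corollary~\ref{coroteo1}.

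The problem is that your $\ell=1$ terms are mis-transcribed. In Theorem~\ref{pol:areaper} the extra factors are $1/(1-yq^{\ell+1})$ and $1/(1-q^{\ell+1})$. So for $\ell=1$ the correct terms are
\[
\frac{t^4q^4(t-1)}{(1-tq^2)(1-tq)(1-q)}
\qquad\text{and}\qquad
\frac{t^4q^6(t-1)}{(1-q^2)(1-tq)(1-q)}.
\]
They contribute $q^4/\bigl((1+q)(1-q)^3\bigr)$ to $N'(1)$ and $-q^6/\bigl((1+q)(1-q)^3\bigr)$ to $D'(1)$, not $q^4/(1-q)^3$ and $-q^6/(1-q)^3$. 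This is not cosmetic. If you carry out the simplification you deferred using your values, you get $q(2-3q+q^2-q^5)/\bigl((1-q)(1-q-q^2)^2\bigr)$, which gives $S_5=30$ and contradicts the statement. With the correct values, the $\ell=0$ parts give $\frac{q(2-3q+q^2-q^3)}{(1-q)(1-q-q^2)^2}$. The two $(t-1)$-contributions combine to
\[
\frac{q^4(1-q-q^2)+q^7}{(1+q)(1-q)^2(1-q-q^2)^2}=\frac{q^4}{(1-q-q^2)^2},
\]
since $1-q-q^2+q^3=(1-q)^2(1+q)$. Adding these gives exactly $q(2-3q+q^2-q^4)/\bigl((1-q)(1-q-q^2)^2\bigr)$.

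Your sanity checks would not have caught the error, because the value you planned to compare against is itself wrong. The displayed coefficient of $q^5$ shows semiperimeters $5,6,6,6,6$, so $S_5=29$ (which the stated formula gives), not $27$. For the closed form, agreement at $n=1,2,3,4$ is not enough. Numerator and denominator both have degree $5$, so the order-$5$ recurrence for $S_n$ only holds from $n=6$ on, and the closed form gives $-1\neq S_0$ at $n=0$. You need agreement at $n=1,\dots,5$, i.e.\ the values $2,3,8,15,29$. The residue $-1$ at $q=1$ and the asymptotic step are fine.
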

\begin{proof}
By setting $y=x$ and $s=1$ in \eqref{eq:functionalareaper} gives
\begin{align}\label{funceccoro25}
    F(1)
= \frac{x^2q}{1-xq}
+ \frac{x^2q^{2}}{1-q} F(1)
+ \left( \frac{x^3 q^{2}}{1-xq} - \frac{x^2q^2}{1-q} \right) F(q).
\end{align}
Define $B(q):=\frac{\partial}{\partial x}F(x,x,q;1)\rvert_{x=1}$.  Since $x$ marks the horizontal semiperimeter and we  set $y=x$, the derivative at $x=1$ records the total value of $\xper(P)+\yper(P)=\sper(P)$, hence
\[
B(q)=\sum_{n\geq 1} S_n q^n.
\]
Differentiating the identity \eqref{funceccoro25} with respect to $x$ and then setting $x=1$, we obtain
\[B(q)=\frac{(2 - q) q}{(1-q)^2} + \frac{2 q^2}{1-q}F(1,1,q) + \frac{q^2}{1-q}B(q) +\frac{q^2}{(1-q)^2} H(q,q),  \]
where $F(1,1,q)=q/(1-q-q^2)$ and  $H(q,q)=F(1,1,q;q)$ is the generating function given in Corollary~\ref{coroteo1}, namely
$$H(q,q)=\frac{(1 - q) q^2}{1 - q - q^2}.$$
Solving for $B(q)$ and substituting the above expressions for $F(1,1,q)$ and $H(q,q)$ yields the desired result. 

Finally, the closed form for $S_n$ follows by extracting coefficients from this rational function and using standard identities for the Fibonacci numbers.
\end{proof}

\subsection{The perimeter}
We now ignore the area statistic by setting $q=1$ in
\eqref{eq:functionalareaper}. This yields a bivariate generating function for Fibonacci polyominoes counted by horizontal and vertical
semiperimeter. We compute this generating function by the kernel method.

Setting $q=1$ in \eqref{eq:functionalareaper} gives
$$\left(1-\frac{x (sy)^{2}}{1-sy} + \frac{xy s^2}{1-s} \right) F(s)
= \frac{xys}{1-ys}
+ \frac{xys^2}{1-s} F(1)
.$$ 

Let $p(m,n)$ denote the number of Fibonacci polyominoes whose boundary has $2m$ horizontal edges and $2n$ vertical edges, and define the corresponding generating function
$$P(x,y):=\sum_{m, n\geq 1} p(m,n)x^m y^n.$$
Thus $P(x,y)=F(x,y,1)$.

To compute $P(x,y)$, we apply the kernel method \cite{ban, banf, pro} to the above functional equation. This method consists of cancelling the \emph{kernel}
\[
K(s):=1-\frac{x(sy)^2}{1-sy}+\frac{xys^2}{1-s}
\]
by choosing $s$ to be a root  $s_0=s_0(x,y)$ of the equation $K(s)=0$. Substituting $s=s_0$ forces the left-hand side to vanish and thus  determines $F(1)$ from the right-hand side.

In our case, we take the root that is analytic at $x=0$ (the \emph{small} root), which is given by
\[
s_0=\frac{1+y-\sqrt{(1-y)(1-y-4xy)}}{2y\bigl(1+x(1-y)\bigr)}.
\]

\begin{theorem}\label{teorperimetro}
The generating function for nonempty Fibonacci polyominoes, counted by horizontal and vertical semiperimeter, is
\[P(x,y)=\frac{1 + x - y - 3 x y -(1+x)\sqrt{(1 - y) (1 - y - 4 x y)}}{2x}.\]
\end{theorem}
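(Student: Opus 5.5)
We apply the kernel method to the $q=1$ specialization of \eqref{eq:functionalareaper}, namely
\[
K(s)F(s)=\frac{xys}{1-ys}+\frac{xys^2}{1-s}\,P(x,y),
\]
where $F(1)=P(x,y)$. The plan is to solve $K(s)=0$ for the small root $s_0$, substitute it, and simplify.

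\textbf{Step 1: the kernel.} Clearing denominators, $K(s)=0$ is equivalent to
\[
(1-s)(1-sy)-xy^2s^2(1-s)+xys^2(1-sy)=0 .
\]
Expanding, the cubic terms $xy^2s^3$ and $-xy^2s^3$ cancel. The equation therefore reduces to the quadratic
\[
y\bigl(1+x(1-y)\bigr)s^2-(1+y)s+1=0 .
\]
Its discriminant is $(1+y)^2-4y-4xy(1-y)=(1-y)(1-y-4xy)$. This gives the root $s_0$ displayed before the theorem; it is the branch with $s_0\to 1/y$ as $x\to 0$.

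\textbf{Step 2: substitution and the key relation.} We must check that $s_0$ is a legitimate substitution: it is a power series in $x$ whose coefficients are rational in $y$, and $F(s_0)$ converges formally. Note that $F(s)$ is a power series in $x$ in which each coefficient is a polynomial in $s$, because the first column of a polyomino with $m$ columns has bounded height. This is the main delicate point; in the worst case one can argue directly with formal power series in $x$. Granting it, setting $s=s_0$ kills the left-hand side and yields
\[
P(x,y)=-\frac{1-s_0}{s_0(1-ys_0)}.
\]

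\textbf{Step 3: algebraic simplification.} We simplify this expression using the quadratic relation $s_0^2=\bigl((1+y)s_0-1\bigr)/\bigl(y(1+x(1-y))\bigr)$. Writing $D=1+x(1-y)$, we compute $s_0(1-ys_0)=s_0-ys_0^2=\bigl(s_0(D-1-y)+1\bigr)/D$. Then
\[
P=-\frac{D(1-s_0)}{1+s_0(D-1-y)}.
\]
Rationalizing with $s_0=(1+y-R)/(2yD)$, where $R=\sqrt{(1-y)(1-y-4xy)}$, should produce the stated closed form
\[
\frac{1+x-y-3xy-(1+x)R}{2x}.
\]
The most efficient route is probably to verify rather than derive. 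Let $P^*$ be the claimed expression. Check that $P^*$ satisfies the quadratic equation obtained by eliminating $s_0$, namely
\[
xP^2-(1+x-y-3xy)P+c=0
\]
for the appropriate $c$. Since $P^*$ and $P$ both vanish at $x=0$, this identifies the correct branch.

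Finally, as a sanity check, the first coefficients should be compared with the expansion of $F(x,y,1)$ obtained by setting $q=1$ in the series listed after Theorem~\ref{pol:areaper}.

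The main obstacle I expect is the messy algebra in Step 3, together with the justification of the root choice in Step 2.
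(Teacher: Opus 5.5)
Your route is the paper's (the kernel method on the $q=1$ specialization), and Step~1 is correct: the cubic terms cancel, the kernel is $y(1+x(1-y))s^2-(1+y)s+1$, and its discriminant is $(1-y)(1-y-4xy)$. The relation $P=-(1-s_0)/\bigl(s_0(1-ys_0)\bigr)$ is also correct, provided the right root is used. The gap is in Step~2, exactly where you flagged it, and it fails there in two ways.

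First, the displayed root does not tend to $1/y$. At $x=0$ the square root equals $1-y$, so $s_0\to 1$; explicitly, $s_0=\frac{1+xC(z)}{1+x(1-y)}$ with $z=xy/(1-y)$. The root tending to $1/y$ is the other one, $s_1=\frac{1-xyC(z)}{y(1+x(1-y))}$, and it cannot be substituted. Since $ys_1=1+O(x)$, every term of the single-column part $\sum_{h\ge1}x(ys)^h$ contributes to the coefficient of $x$, so the sum diverges formally. Indeed $-(1-s_1)/\bigl(s_1(1-ys_1)\bigr)\sim(1-y)/x$, which is not a power series.

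Second, the justification you give rests on a false claim and would not separate the two roots anyway. For fixed $m$ the first column is unbounded (already $[x^1]F(s)=ys/(1-ys)$), so $[x^m]F(s)$ is not a polynomial in $s$. Moreover, ``a power series in $x$ with coefficients rational in $y$'' describes $s_1$ just as well as $s_0$.

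The correct argument is as follows. Since $h_1\le\yper(P)$, each coefficient $[x^my^n]F(s)$ is a polynomial in $s$ of degree at most $n$. Hence $F(s)\in\Z[s][[x,y]]$, and substituting any element of $\Z[[x,y]]$ for $s$ is a well-defined ring homomorphism. The root $s_0$ lies in $\Z[[x,y]]$; $s_1$ does not.

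Work with the equation with denominators cleared,
$\bigl(1-(1+y)s+y(1+x(1-y))s^2\bigr)F(s)=xys(1-s)+xys^2(1-ys)P$.
This matters because $1/(1-s)$ is not in that ring and $1-s_0=O(x)$. Substitute $s=s_0$, cancel the nonzero factor $xys_0$, and invert $s_0(1-ys_0)$, whose constant term is $1$. This yields your formula.

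Step~3 also still has to be carried out. The elimination gives $c=xy(1+x(1-y))$. The second root of the quadratic, $\bigl(1+x-y-3xy+(1+x)R\bigr)/(2x)$ with your $R$, has a $1/x$ pole, so the power-series condition selects the stated branch. More directly, insert $s_0$ written in terms of $C(z)$ and use $xyC(z)^2=(1-y)(C(z)-1)$; this yields $P=y\bigl((1+x)C(z)-1\bigr)$, the form the paper uses in its next theorem.
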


The first few terms of the expansion of $P(x,y)$ with respect to $y$ are
\begin{multline*}
 P(x,y)=   x y+\left(x^2+x\right) y^2+\left(2 x^3+3 x^2+x\right) y^3\\+\left(5 x^4+\bm{9 x^3}+5 x^2+x\right)
   y^4+\left(14 x^5+29 x^4+21 x^3+7 x^2+x\right) y^5+O\left(y^6\right).
\end{multline*}

The bold coefficients in the above expansion correspond to the Fibonacci polyominoes shown in Figure~\ref{fig2}.

\begin{figure}[ht!]
\centering
\begin{tikzpicture}[scale=0.5, line cap=round, line join=round]
\tikzset{cell/.style={draw=black, very thick, fill=cyan!10!white}}
\foreach \x/\y in {0/0,0/1,1/1,1/2,1/3,2/2}{
  \draw[cell] (\x,\y) rectangle ++(1,1);
}
\end{tikzpicture}
\hspace{0.5cm}
\begin{tikzpicture}[scale=0.5, line cap=round, line join=round]
\tikzset{cell/.style={draw=black, very thick, fill=cyan!10!white}}
\foreach \x/\y in {0/0,0/1,1/1,1/2,2/2,2/3}{
  \draw[cell] (\x,\y) rectangle ++(1,1);
}
\end{tikzpicture}
\hspace{0.5cm}
\begin{tikzpicture}[scale=0.5, line cap=round, line join=round]
\tikzset{cell/.style={draw=black, very thick, fill=cyan!10!white}}
\foreach \x/\y in {0/0,0/1,0/2,0/3,1/1,1/2,2/2}{
  \draw[cell] (\x,\y) rectangle ++(1,1);
}
\end{tikzpicture}
\hspace{0.5cm}
\begin{tikzpicture}[scale=0.5, line cap=round, line join=round]
\tikzset{cell/.style={draw=black, very thick, fill=cyan!10!white}}
\foreach \x/\y in {0/0,0/1,0/2,1/1,1/2,1/3,2/2}{
  \draw[cell] (\x,\y) rectangle ++(1,1);
}
\end{tikzpicture}
\hspace{0.5cm}
\begin{tikzpicture}[scale=0.5, line cap=round, line join=round]
\tikzset{cell/.style={draw=black, very thick, fill=cyan!10!white}}
\foreach \x/\y in {0/0,0/1,0/2,1/1,1/2,2/2,2/3}{
  \draw[cell] (\x,\y) rectangle ++(1,1);
}
\end{tikzpicture}
\vspace{0.5cm}

\begin{tikzpicture}[scale=0.5, line cap=round, line join=round]
\tikzset{cell/.style={draw=black, very thick, fill=cyan!10!white}}
\foreach \x/\y in {0/0,0/1,1/1,1/2,1/3, 2/2,2/3}{
  \draw[cell] (\x,\y) rectangle ++(1,1);
}
\end{tikzpicture}
\hspace{0.5cm}
\begin{tikzpicture}[scale=0.5, line cap=round, line join=round]
\tikzset{cell/.style={draw=black, very thick, fill=cyan!10!white}}
\foreach \x/\y in {0/0,0/1,0/2,0/3,1/1,1/2,1/3,2/2}{
  \draw[cell] (\x,\y) rectangle ++(1,1);
}
\end{tikzpicture}
\hspace{0.5cm}
\begin{tikzpicture}[scale=0.5, line cap=round, line join=round]
\tikzset{cell/.style={draw=black, very thick, fill=cyan!10!white}}
\foreach \x/\y in {0/0,0/1,0/2,1/1,1/2,1/3,2/2,2/3}{
  \draw[cell] (\x,\y) rectangle ++(1,1);
}
\end{tikzpicture}
\hspace{0.5cm}
\begin{tikzpicture}[scale=0.5, line cap=round, line join=round]
\tikzset{cell/.style={draw=black, very thick, fill=cyan!10!white}}
\foreach \x/\y in {0/0,0/1,0/2,0/3,1/1,1/2,1/3,2/2,2/3}{
  \draw[cell] (\x,\y) rectangle ++(1,1);
}
\end{tikzpicture}
\caption{The nine Fibonacci polyominoes with horizontal semiperimeter $3$ and vertical semiperimeter $4$.}\label{fig2}
\end{figure}

Recall that the $n$th Catalan number is $C_n=\frac{1}{n+1}\binom{2n}{n}$, and that its generating function is  
$$C(z)=\sum_{n\geq 0}C_n z^n=\frac{1-\sqrt{1-4z}}{2z}.$$
 For numerous combinatorial objects enumerated by the Catalan numbers, we refer the reader to the book \cite{stan}. 

The following result gives a closed form for the coefficient $[x^n]P(x,y)$ in terms of Catalan numbers. 

\begin{theorem}
 For $n\geq 1$, 
 \[[x^n]P(x,y)=\frac{y^n}{(1-y)^n}\left(yC_n + (1-y)C_{n-1}\right).\]
\end{theorem}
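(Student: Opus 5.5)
We extract $[x^n]$ directly from the closed form of Theorem~\ref{teorperimetro}. Set $u=\frac{xy}{1-y}$, so that
\[
\sqrt{(1-y)(1-y-4xy)}=(1-y)\sqrt{1-4u}.
\]
Since $\sqrt{1-4u}=1-2uC(u)$, the numerator of $P(x,y)$ becomes
\[
1+x-y-3xy-(1+x)(1-y)\bigl(1-2uC(u)\bigr)
=1+x-y-3xy-(1+x)(1-y)+2(1+x)xy\,C(u).
\]
The non-Catalan part simplifies: $(1+x)(1-y)=1+x-y-xy$, so the difference is $-2xy$. Dividing by $2x$ gives the compact form
\[
P(x,y)=-y+(1+x)\,y\,C\!\left(\frac{xy}{1-y}\right).
\]
I would first verify this identity against the displayed expansion; for example, the coefficient of $xy$ is $1$.

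Next I extract coefficients. We have
\[
C\!\left(\frac{xy}{1-y}\right)=\sum_{k\ge0}C_k\,\frac{x^ky^k}{(1-y)^k}.
\]
For $n\ge1$, the term $-y$ contributes nothing, so
\[
[x^n]P=y\left(C_n\frac{y^n}{(1-y)^n}+C_{n-1}\frac{y^{n-1}}{(1-y)^{n-1}}\right)
=\frac{y^n}{(1-y)^n}\bigl(yC_n+(1-y)C_{n-1}\bigr),
\]
which is the claimed formula.

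The only delicate point is the branch choice and the validity of the rewriting as formal power series. The square root in Theorem~\ref{teorperimetro} is the branch equal to $1$ at $x=0$, consistent with the small-root choice $s_0$. The factorization $\sqrt{(1-y)^2(1-4u)}=(1-y)\sqrt{1-4u}$ holds in $\mathbb{Q}[[x,y]]$ because $u$ has positive valuation in $x$. I therefore expect no real obstacle beyond this algebraic simplification. As a sanity check, at $n=3$ the formula gives $\frac{y^3}{(1-y)^3}(5y+2(1-y))=y^3(2+3y)(1-y)^{-3}$. Its $y^4$ coefficient is $3\cdot2+3=9$, matching Figure~\ref{fig2}.
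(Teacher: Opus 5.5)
Your proposal is correct and follows the paper's proof essentially step for step. You substitute $u=xy/(1-y)$ and use $\sqrt{1-4u}=1-2uC(u)$ to reach $P(x,y)=y\bigl((1+x)C(u)-1\bigr)$, then read off $[x^n]$ from the expansion of $C(u)$; your $n=3$ check against Figure~\ref{fig2} is also right. One minor nit: the branch to use is the one with constant term $1$ as a series in $x,y$, which reduces to $1-y$ at $x=0$, not to $1$.
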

\begin{proof}
We rewrite the square root in Theorem~\ref{teorperimetro} in terms of the generating function of the Catalan numbers. Set $z=xy/(1-y)$. Then
\[
\sqrt{(1-y)(1-y-4xy)}
= (1-y)\sqrt{1-\frac{4xy}{1-y}}
= (1-y)\sqrt{1-4z}.
\]
Since $\sqrt{1-4z}=1-2zC(z)$, we obtain
\[
\sqrt{(1-y)(1-y-4xy)}=(1-y)-2xy C(z).
\]
Substituting this expression into Theorem~\ref{teorperimetro} and simplifying gives
\[
P(x,y)=y\bigl((1+x)C(z)-1\bigr),
 \qquad z=\frac{xy}{1-y}.
\]
Expanding the above expression, we obtain 
\begin{align}\label{ExPCat}
P(x,y)
= \sum_{k\geq 1} C_k\, \frac{x^k y^{k+1}}{(1-y)^k}
+
\sum_{k\geq 0} C_k\, \frac{x^{k+1} y^{k+1}}{(1-y)^k}.
\end{align}
By comparing the coefficient of $x^n$ in $P(x,y)$, we immediately obtain the desired result.
\end{proof}

From \eqref{ExPCat} and the identity
\[
\frac{1}{(1-y)^k}
=
\sum_{m\geq 0}\binom{m+k-1}{m}y^m,
\]
we can similarly extract the coefficient of $y^n$.

\begin{theorem}
 For $n\geq 2$, we have 
 \[[y^n]P(x,y)=(1+x)x\sum_{k=0}^{n-2}C_{k+1}\binom{n-2}{k}x^k,\]
 and $[y]P(x,y)=x$.  
\end{theorem}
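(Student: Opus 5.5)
We start from the expansion \eqref{ExPCat} and extract the coefficient of $y^n$ from each sum separately, using the negative binomial identity $\frac{1}{(1-y)^k}=\sum_{m\geq 0}\binom{m+k-1}{m}y^m$.

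\textbf{The first sum.} For the term $C_k x^k y^{k+1}(1-y)^{-k}$ with $k\geq 1$, the coefficient of $y^n$ requires $m=n-k-1$. It therefore equals $C_k\binom{n-2}{n-k-1}x^k$, which is $C_k\binom{n-2}{k-1}x^k$. This is nonzero only for $1\leq k\leq n-1$. Reindexing with $k=j+1$ turns the first contribution into
\[
x\sum_{j=0}^{n-2}C_{j+1}\binom{n-2}{j}x^j .
\]

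\textbf{The second sum.} For the term $C_k x^{k+1}y^{k+1}(1-y)^{-k}$, the coefficient of $y^n$ is $C_k\binom{n-2}{n-k-1}x^{k+1}$, valid when $k\geq 1$ and $k\leq n-1$. The $k=0$ term needs separate care. It equals $xy$ and contributes only to $y^1$, which is why $n=1$ is treated apart: there only the $k=0$ term survives, giving $[y]P=x$. For $n\geq 2$ we reindex with $k=j+1$ again and obtain
\[
x^2\sum_{j=0}^{n-2}C_{j+1}\binom{n-2}{j}x^j .
\]

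\textbf{Conclusion and main obstacle.} Adding the two contributions gives $(x+x^2)\sum_{j=0}^{n-2}C_{j+1}\binom{n-2}{j}x^j$, which is the claimed formula. The only delicate step is the boundary bookkeeping. One must treat the binomial $\binom{m+k-1}{m}$ correctly when $k=0$: it is $1$ at $m=0$ and $0$ otherwise, not the generic formula. One must also check that the ranges of $k$ match after reindexing.
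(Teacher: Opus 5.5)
Your proof is correct and follows the paper's route: the paper obtains this theorem by extracting $[y^n]$ from the two Catalan sums in \eqref{ExPCat} with the negative binomial expansion of $(1-y)^{-k}$, leaving the details implicit. You supply those details correctly, namely the range $1\leq k\leq n-1$ with coefficient $\binom{n-2}{k-1}$, the shift $k=j+1$, and the fact that the $k=0$ term $xy$ affects only $[y]P(x,y)=x$.
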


From the previous theorem we obtain the following expression for the coefficients of $P(x,y)$ in terms of Catalan numbers. 

\begin{theorem}\label{thm:perimeter-coefficients}
For $m\geq 1$ and $n\geq 2$,
\[
p(m,n)
=
[x^my^n]P(x,y)
=
C_m\binom{n-2}{m-1}
+
C_{m-1}\binom{n-2}{m-2}.
\]
In particular, $p(n,n)=C_{n-1}$ for every $n\geq 2$.
\end{theorem}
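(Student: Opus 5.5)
We read the coefficient of $x^m$ directly off the preceding theorem, which gives, for $n\geq 2$,
\[
[y^n]P(x,y)=(1+x)\,x\sum_{k=0}^{n-2}C_{k+1}\binom{n-2}{k}x^k
=\sum_{k=0}^{n-2}C_{k+1}\binom{n-2}{k}x^{k+1}+\sum_{k=0}^{n-2}C_{k+1}\binom{n-2}{k}x^{k+2}.
\]
In the first sum the exponent of $x$ is $m$ exactly when $k=m-1$, which contributes $C_m\binom{n-2}{m-1}$. In the second sum it is $m$ when $k=m-2$, which contributes $C_{m-1}\binom{n-2}{m-2}$. We adopt the usual convention that $\binom{a}{b}=0$ when $b<0$ or $b>a$. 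With this convention the formula holds uniformly for all $m\geq 1$, including the boundary values $m=1$, where the second term vanishes, and $m>n$, where both terms vanish. This proves the expression for $p(m,n)$.

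As a cross-check, the same formula can be obtained from \eqref{ExPCat}. There the coefficient of $x^m$ is
\[
C_m\,\frac{y^{m+1}}{(1-y)^m}+C_{m-1}\,\frac{y^{m}}{(1-y)^{m-1}}.
\]
Applying $[y^n]\,(1-y)^{-k}y^{j}=\binom{n-j+k-1}{k-1}$ gives $\binom{n-2}{m-1}$ for the first term and $\binom{n-2}{m-2}$ for the second, in agreement with the above.

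For the diagonal case, set $m=n\geq 2$. The first term contains $\binom{n-2}{n-1}=0$, and the second contains $\binom{n-2}{n-2}=1$. Hence $p(n,n)=C_{n-1}$.

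This is routine coefficient extraction, and there is no real obstacle. The only point needing care is the binomial convention at the edges ($m=1$ and $m=n$). That convention should be stated explicitly, so that the formula is valid without separate case splits.
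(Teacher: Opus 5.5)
Your proposal is correct and follows essentially the same route as the paper, which obtains this theorem by reading off the coefficient of $x^m$ in the preceding formula for $[y^n]P(x,y)$; the diagonal case $p(n,n)=C_{n-1}$ then follows from $\binom{n-2}{n-1}=0$ and $\binom{n-2}{n-2}=1$, exactly as you note. The paper additionally explains the diagonal case through the upper-boundary Dyck path, and it later gives a bijective proof of the full formula in Theorem~\ref{thm:Dyck-polyomino-bijection}.
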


Although Theorem~\ref{thm:perimeter-coefficients} follows directly from
the generating function, its two summands suggest that the family admits
a more structural decomposition. The diagonal case already gives a first
indication of this phenomenon: if $m=n$, then a Fibonacci polyomino has
exactly $n$ columns and $n$ rows, and its upper boundary determines a
Dyck path of semilength $n-1$. Conversely, every Dyck path of semilength
$n-1$ determines a unique Fibonacci polyomino in $\F_{n,n}$. Hence $p(n,n)=C_{n-1}$.

For instance, when $n=4$, we obtain $p(4,4)=C_3=5$, see Figure~\ref{ex2}. In the next section, we extend this idea and give
a bijective explanation of the full formula in
Theorem~\ref{thm:perimeter-coefficients} in terms of labeled Dyck paths.

\begin{figure}[ht!]
\centering
\begin{tikzpicture}[scale=0.5, line cap=round, line join=round]
\tikzset{
  cell/.style={draw=black, very thick, fill=cyan!10!white},
  prof/.style={red, ultra thick}
}

%------------------ 1 ------------------
\begin{scope}[xshift=0cm,yshift=0cm]
\foreach \x/\y in {0/0,0/1,1/1,1/2,2/2,2/3,3/3}{
  \draw[cell] (\x,\y) rectangle ++(1,1);
}
\draw[prof] (0,1)--(0,2)--(1,2)--(1,3)--(2,3)--(2,4)--(3,4);
\end{scope}

%------------------ 2 ------------------
\begin{scope}[xshift=6.6cm,yshift=0cm]
\foreach \x/\y in {0/0,0/1,0/2,1/1,1/2,2/2,2/3,3/3}{
  \draw[cell] (\x,\y) rectangle ++(1,1);
}
\draw[prof] (0,1)--(0,3)--(1,3)--(2,3)--(2,4)--(3,4);
\end{scope}

%------------------ 3 ------------------
\begin{scope}[xshift=13.2cm,yshift=0cm]
\foreach \x/\y in {0/0,0/1,1/1,1/2,1/3,2/2,2/3,3/3}{
  \draw[cell] (\x,\y) rectangle ++(1,1);
}
\draw[prof] (0,1)--(0,2)--(1,2)--(1,3)--(1,4)--(2,4)--(3,4);
\end{scope}

%------------------ 4 ------------------
\begin{scope}[xshift=19.8cm,yshift=0cm]
\foreach \x/\y in {0/0,0/1,0/2,1/1,1/2,1/3,2/2,2/3,3/3}{
  \draw[cell] (\x,\y) rectangle ++(1,1);
}
\draw[prof] (0,1)--(0,3)--(1,3)--(1,4)--(2,4)--(3,4);
\end{scope}

%------------------ 5 ------------------
\begin{scope}[xshift=26.4cm,yshift=0cm]
\foreach \x/\y in {0/0,0/1,0/2,0/3,1/1,1/2,1/3,2/2,2/3,3/3}{
  \draw[cell] (\x,\y) rectangle ++(1,1);
}
\draw[prof] (0,1)--(0,4)--(1,4)--(2,4)--(3,4);
\end{scope}

\end{tikzpicture}
\caption{The five Fibonacci polyominoes in $\F_{4,4}$. In each case,
the upper boundary, shown in red, determines a Dyck path of semilength
$3$.}
\label{ex2}
\end{figure}

\section{A Dyck-Path Bijection}
\label{sec:Dyck-bijection}

In this section, we give a combinatorial explanation of Theorem~\ref{thm:perimeter-coefficients}. We establish a bijection between Fibonacci polyominoes with prescribed horizontal and vertical semiperimeters and Dyck paths whose up steps are labeled by nonnegative integers satisfying  certain conditions. The bijection will also yield refinements involving several natural statistics.  To achieve this objective, we divide the polyominoes into two classes: those whose last column consists of at least two  cells and those whose last column contains a single cell. Lemma~\ref{lem:decorated-Dyck} provides a combinatorial interpretation for the first class and allows us to derive one for the second class.

Let $P$ be a Fibonacci polyomino with $m$ columns, and write $h(P)=(h_1,h_2,\ldots,h_m)$ for its sequence of column heights. Its horizontal semiperimeter is
$m$, while its vertical semiperimeter is
\begin{equation}\label{eq:yper-heights}
\yper(P)
=
m-1+h_m
+
\sum_{i=1}^{m-1}
\max\{h_i-h_{i+1}-1,0\}.
\end{equation}

Indeed, the lower staircase contributes $m-1$ vertical edges, the
right boundary of the last column contributes $h_m$, and an additional
vertical contribution occurs along the upper boundary whenever
$h_i>h_{i+1}+1$, in which case its size is $h_i-h_{i+1}-1$.

We shall first consider Fibonacci polyominoes whose last column has
height at least $2$. For such a polyomino, it is convenient to reverse
the sequence of column heights and subtract $2$ from every entry,
obtaining $(h_m-2,h_{m-1}-2,\ldots,h_1-2)$, 
a sequence of nonnegative integers. This representation will be used
below to connect Fibonacci polyominoes with labeled Dyck paths.

We now introduce the sequence statistics needed for this correspondence.   For an integer $n$, write $(n)_+:=\max\{n,0\}$. For a sequence $\mathbf a=(a_1,\ldots,a_\ell)$ of nonnegative integers,
define 
\begin{align*}
    \operatorname{exc}(\mathbf a) &= a_1+\sum_{j=2}^{\ell}(a_j-a_{j-1}-1)_+,\\
    \asc(\mathbf{a}) &= \bigl|\{j:1\leq j<\ell,\ a_j<a_{j+1}\}\bigr|,\\
    \lev(\mathbf{a}) &= \bigl|\{j:1\leq j<\ell,\ a_j=a_{j+1}\}\bigr|.
\end{align*}  An entry $a_i$ is a \emph{strict right-to-left minimum} if $a_i<a_j$ for all $j>i$.  Let $\rlmin(\mathbf a)$ denote the number of strict right-to-left minima of $\mathbf a$. 

For $\ell\geq 1$ and $r\geq 0$, let $\mathcal{A}_{\ell,r}$ be the set
of nonnegative integer sequences $\mathbf{a}=(a_1,\ldots,a_\ell)$ such that $\operatorname{exc}(\mathbf{a})=r$.

Since our objective is to prove that $\mathcal{A}_{\ell,r}$ is in one-to-one correspondence with a class of labeled Dyck paths, we first provide some background on labeled Dyck paths.

A \emph{Dyck path} of semilength $\ell$ is a lattice path from $(0,0)$ to $(2\ell,0)$ with steps $U=(1,1)$ and $D=(1,-1)$ that never goes below the horizontal axis. Let $\mathcal{D}_\ell$ denote the set of Dyck paths of semilength $\ell$, and let $\epsilon$ denote the empty Dyck path, that is, the unique Dyck path of semilength $0$. A \emph{peak} is an occurrence of two consecutive steps $UD$. A \emph{return} is a down step whose endpoint lies on the horizontal axis. Thus the final down step of every nonempty Dyck path is a return. For a Dyck path $\pi$, let $\pk(\pi)$ and $\ret(\pi)$ denote its number of peaks and returns, respectively. A nonempty Dyck path is \emph{primitive} if it touches the horizontal axis only at its initial and terminal vertices. Equivalently, $\pi$ is primitive if $\ret(\pi)=1$. 

A \emph{labeled Dyck path} is a Dyck path in which each up step is assigned a nonnegative integer label. We represent a labeled Dyck path by a pair
$(\pi,\lambda)$, where $\pi$ is the underlying Dyck path and $\lambda=(\lambda_1,\ldots,\lambda_\ell)$ is the sequence of labels of its up steps, read from left to right. If $\lambda$ is such a labeling, write
\[
|\lambda|
=
\sum_{e\in\mathcal{U}(\pi)}\lambda(e)=\lambda_1+\cdots +\lambda_\ell,
\] 
where $\mathcal{U}(\pi)$ denotes the set of up steps of $\pi$. 
For $\ell\geq 0$ and $r\geq 0$, let $\mathcal{D}_{\ell,r}$ denote the set of
labeled Dyck paths $(\pi,\lambda)$ of semilength $\ell$ whose up-step labels
are nonnegative integers with total label $|\lambda|=r$. For example,
the labeled Dyck path in Figure~\ref{Fig3} belongs to
$\mathcal{D}_{9,27}$.

Let $(\pi,\lambda)$ be a nonempty labeled Dyck path, with first-return decomposition  $\pi=U\pi_LD\pi_R$, and let $\lambda_L$ and $\lambda_R$ be the labelings induced on $\pi_L$ and $\pi_R$, respectively. Define 
\[ \delta(\pi_L,\lambda_L):= \begin{cases} 1, &\text{if $\pi_L$ is nonempty and primitive, and the initial up}\\ &\text{step of $\pi_L$ has label $0$},\\ 0, &\text{otherwise}. \end{cases} \]

Then we recursively define the statistic $\lv$ by
\[
\lv(\pi,\lambda)
=
\lv(\pi_L,\lambda_L)
+
\lv(\pi_R,\lambda_R)
+
\delta(\pi_L,\lambda_L).
\] 
Intuitively, $\lv(\pi,\lambda)$ counts the occurrences, throughout the
recursive first-return decomposition, in which the left subpath is
nonempty and primitive and its initial up step has label $0$. Equivalently, $\lv(\pi,\lambda)$ is the number of factors
$UU\sigma DD$ of $\pi$, where $\sigma$ is a possibly empty Dyck path
and the second up step is labeled $0$. For
example, Figure~\ref{Fig3} shows a labeled Dyck path $\pi$ of semilength $9$ whose up-step labels, read from left to right, are   $\lambda=(3,5,2,7,4,0,6,0,0)$.  Applying the recursive definition, one verifies that $\lv(\pi,\lambda)=3$.

\begin{figure}[ht!]
\centering
\begin{tikzpicture}[scale=0.72, line cap=round, line join=round]
  % Grid
  \draw[step=1cm, very thin, gray!50] (0,0) grid (18,4);

  % Axes
  \draw[->] (-0.3,0) -- (18.5,0);
  \draw[->] (0,-0.3) -- (0,4.5);

  % Dyck path of semilength 9
  \draw[very thick, blue]
    (0,0)
    -- node[midway, above left, text=black] {$3$} (1,1)
    -- (2,0)
    -- node[midway, above left, text=black] {$5$} (3,1)
    -- (4,0)
    -- node[midway, above left, text=black] {$2$} (5,1)
    -- node[midway, above left, text=black] {$7$} (6,2)
    -- (7,1)
    -- node[midway, above left, text=black] {$4$} (8,2)
    -- node[midway, above left, text=black] {$0$} (9,3)
    -- (10,2)
    -- (11,1)
    -- node[midway, above left, text=black] {$6$} (12,2)
    -- node[midway, above left, text=black] {$0$} (13,3)
    -- node[midway, above left, text=black] {$0$} (14,4)
    -- (15,3)
    -- (16,2)
    -- (17,1)
    -- (18,0);

  % Vertices of the path
  \foreach \x/\y in {
    0/0,1/1,2/0,3/1,4/0,5/1,6/2,7/1,8/2,9/3,
    10/2,11/1,12/2,13/3,14/4,15/3,16/2,17/1,18/0
  }
    \fill (\x,\y) circle (2pt);

\end{tikzpicture}
\caption{A labeled Dyck path of semilength $9$ with
$\lv(\pi,\lambda)=3$.}
\label{Fig3}
\end{figure}

\begin{lemma}\label{lem:decorated-Dyck}
There is a bijection $\Phi$ between the set $\mathcal{A}_{\ell,r}$ of nonnegative sequences $\mathbf a=(a_1,\ldots, a_\ell)$ with  $\operatorname{exc}(\mathbf a)=r$, and the set $\mathcal{D}_{\ell,r}$ of labeled  Dyck paths of
semilength $\ell$ with total label sum equal to $r$.  Under this bijection, if $\mathbf{a}\in\mathcal{A}_{\ell,r}$ and $(\pi,\lambda)=\Phi(\mathbf a)$, then
\[
\asc(\mathbf{a})=\pk(\pi)-1,
\qquad
\lev(\mathbf{a})=\lv(\pi,\lambda),
\qquad
\rlmin(\mathbf{a})=\ret(\pi).
\]
Moreover, the label of the initial up step of $\pi$ is $\min\{a_1,\ldots,a_\ell\}$.  Consequently, 
\[
\lvert\mathcal{A}_{\ell,r}\rvert
=
C_\ell\binom{r+\ell-1}{\ell-1}.
\]
\end{lemma}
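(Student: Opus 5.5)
The plan is to construct $\Phi$ recursively, by matching the decomposition of $\mathbf a$ at its strict right-to-left minima with the decomposition of a Dyck path into primitive components. Once $\Phi$ is built and shown to be bijective, the count follows at once. There are $C_\ell$ Dyck paths of semilength $\ell$, and the labelings of their $\ell$ up steps by nonnegative integers with total $r$ are weak compositions of $r$ into $\ell$ parts, of which there are $\binom{r+\ell-1}{\ell-1}$. Hence $|\mathcal A_{\ell,r}|=|\mathcal D_{\ell,r}|=C_\ell\binom{r+\ell-1}{\ell-1}$.

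\emph{Block decomposition.} Let $i_1<\dots<i_k=\ell$ be the positions of the strict right-to-left minima of $\mathbf a$, with values $m_1<\dots<m_k$. Cut $\mathbf a$ into blocks $B_j=(a_{i_{j-1}+1},\dots,a_{i_j})$, with $i_0=0$. Every non-final entry of $B_j$ is $\ge m_j$, since otherwise it would be a strict right-to-left minimum. So each block has the form $(\mathbf b,m_j)$ with $\mathbf b\ge m_j$ entrywise, and $m_j$ is the unique strict right-to-left minimum of the block. Conversely, any sequence of such blocks with strictly increasing final values reassembles uniquely.

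\emph{Splitting of $\operatorname{exc}$.} I will check two facts.
\begin{enumerate}
\item At a junction between blocks, the first entry $x$ of $B_{j}$ satisfies $x\ge m_j>m_{j-1}$. Hence the junction term $(x-m_{j-1}-1)_+$ equals $x-m_{j-1}-1$ exactly.
\item Inside a block, the term $(m_j-b_{\mathrm{last}}-1)_+$ vanishes.
\end{enumerate}
With $m_0:=-1$, these give
\[
\operatorname{exc}(\mathbf a)=\sum_{j}\Bigl((m_j-m_{j-1}-1)+\operatorname{exc}(\mathbf b^{(j)}-m_j)\Bigr),
\]
where $\mathbf b^{(j)}$ is the block $B_j$ with its last entry removed, and $\operatorname{exc}$ of the empty sequence is $0$. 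The same computation shows $\operatorname{exc}$ is invariant under the shift $\mathbf b\mapsto\mathbf b-m_j$ apart from the first entry, which is what makes the shift legitimate.

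\emph{Definition of $\Phi$.} I set
\[
\Phi(\mathbf a)=\prod_j U_{(m_j-m_{j-1}-1)}\,\Phi(\mathbf b^{(j)}-m_j)\,D,
\]
where the subscript is the label of that up step, and $\Phi$ of the empty sequence is $\epsilon$. By induction on $\ell$:
\begin{itemize}
\item the semilength is $\ell$;
\item the total label is $\operatorname{exc}(\mathbf a)$, by the splitting above;
\item the initial label is $m_1=\min\mathbf a$;
\item $\ret(\pi)=k=\rlmin(\mathbf a)$.
\end{itemize}
The inverse reads the primitive components of $\pi$. The labels of the first up steps recover $m_1<m_2<\cdots$, since $m_j=m_{j-1}+1+\text{label}$. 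Recursion on the interior paths recovers the shifted $\mathbf b^{(j)}$, which yields bijectivity.

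\emph{Ascents and peaks.} Inside a block $(\mathbf b,m)$, the junction of $\mathbf b$ with $m$ is never an ascent. Each block junction is an ascent, giving $k-1$ ascents. Each component $U\pi'D$ contributes $\pk(\pi')$ peaks, or one peak if $\pi'=\epsilon$. Setting $f(\mathbf a)=\asc(\mathbf a)+1$, the recursion must give $f(\mathbf a)=\sum_j \max\{f(\mathbf b^{(j)}-m_j),1\}$ with $f(\emptyset)=0$. This matches $\pk$ term by term, provided $\asc$ of a nonempty $\mathbf b^{(j)}$ together with its trailing $m_j$ equals $\asc(\mathbf b^{(j)})$. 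That holds because $b_{\mathrm{last}}\ge m_j$.

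\emph{Levels (main obstacle).} Levels cannot occur at block junctions, because values strictly increase there. Inside a block, the only new level is at the junction $b_{\mathrm{last}}=m_j$. I need to show this happens exactly when $\pi_L=\Phi(\mathbf b^{(j)}-m_j)$ is nonempty, primitive, and has initial label $0$. Primitivity means $\mathbf b^{(j)}-m_j$ has a single strict right-to-left minimum, namely its last entry. Initial label $0$ means its minimum is $0$. Together these say $b_{\mathrm{last}}-m_j=0$ is the minimum, which is exactly the level condition. The converse needs care: if $b_{\mathrm{last}}=m_j$, then the last entry $0$ is the minimum of the shifted sequence and the unique strict right-to-left minimum, so the shifted sequence is a single block and $\pi_L$ is primitive with initial label $0$. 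I expect this step, reconciling the first-return recursion defining $\lv$ with the concatenation structure of $\Phi$, to need the most bookkeeping. I would handle it by writing $\Phi(\mathbf a)=U\pi_LD\,\pi_R$, where $\pi_R=\Phi$ applied to the remaining blocks after shifting them by $m_1+1$, and checking that $\lev$ satisfies the same recursion as $\lv$.
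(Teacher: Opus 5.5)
Your proposal is correct and gives exactly the paper's map $\Phi$. The paper splits $\mathbf a$ only at the rightmost occurrence of its minimum (your $m_1$) and recurses through the first-return decomposition $U\pi_LD\pi_R$; your block decomposition at all strict right-to-left minima is that recursion unrolled along the right spine, which makes $\rlmin(\mathbf a)=\ret(\pi)$ immediate. The level step you flag as the main obstacle is handled in the paper exactly as you describe: $\mathbf a^L$ is nonempty and ends in $0$ if and only if it has minimum $0$ and a single strict right-to-left minimum, if and only if $\pi_L$ is primitive with initial label $0$. The additivity you need, $\lv\bigl(\prod_j U\pi'_jD\bigr)=\sum_j\bigl(\lv(\pi'_j)+\delta(\pi'_j)\bigr)$, follows directly by unrolling the recursive definition of $\lv$.
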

\begin{proof}
  We construct recursively the bijection  $\Phi:\mathcal{A}_{\ell,r}\longrightarrow\mathcal{D}_{\ell,r}$.   For convenience, the empty sequence
corresponds to the empty Dyck path $\epsilon$. Assuming $\ell>0$, let $\mathbf{a}=(a_1,\ldots,a_\ell)\in\mathcal{A}_{\ell,r}$, and set $\rho=\min\{a_1,\ldots,a_\ell\}$. Let $k$ be the rightmost index such that $a_k=\rho$. Define
\[
\mathbf{a}^{L}
=
(a_1-\rho,\ldots,a_{k-1}-\rho) \qquad \text{and} \qquad 
\mathbf{a}^{R}
=
(a_{k+1}-\rho-1,\ldots,a_\ell-\rho-1).
\]
Since $k$ is the rightmost occurrence of the minimum, every entry to its
right is at least $\rho+1$. Thus $\mathbf{a}^{L}$ and
$\mathbf{a}^{R}$ are sequences of nonnegative integers.

  Apply the construction recursively to obtain $\Phi(\mathbf{a}^{L})=(\pi_L,\lambda_L)$ and $\Phi(\mathbf{a}^{R})=(\pi_R,\lambda_R)$. We define
\[
\Phi(\mathbf{a})=(\pi,\lambda),
 \text{ where } 
\pi=U\pi_LD\pi_R \text{ and } 
\]
$\lambda$ is the sequence of nonnegative integers consisting of the juxtaposition of $\rho$, $\lambda_L$ and $\lambda_R$.
 The initial up step of
$\pi$ has label $\rho=\min\{a_1,\ldots,a_\ell\}$.

We next show that the construction is invertible. Let $(\pi,\lambda)$ be
a nonempty labeled Dyck path, and write its first-return decomposition as $\pi=U\pi_LD\pi_R$. Let $\rho$ be the label of the initial up step. Recursively recover
$\mathbf{a}^{L}$ and $\mathbf{a}^{R}$ from the labeled paths
$(\pi_L,\lambda_L)$ and $(\pi_R,\lambda_R)$, respectively, and set
\[
\mathbf{a}
=
\bigl(
\rho+\mathbf{a}^{L},
\ \rho,\
\rho+1+\mathbf{a}^{R}
\bigr),
\]
where addition to a sequence is understood componentwise. Every entry to the
left of the central $\rho$ is at least $\rho$, whereas every entry to its
right is at least $\rho+1$. Hence the central entry is the rightmost
occurrence of the minimum of $\mathbf{a}$. It follows that the two
constructions are inverse to each other.

We now verify the statistics in the statement. We do this recursively,
with all sequence statistics understood to be $0$ on the empty sequence.

First, from the decomposition of $\mathbf a$ one obtains
\[
\operatorname{exc}(\mathbf a)
=
\rho+\operatorname{exc}(\mathbf a^L)+\operatorname{exc}(\mathbf a^R).
\]
Indeed, adding $\rho$ to all entries of $\mathbf a^L$ contributes the extra initial term $\rho$
and leaves all subsequent excesses unchanged. The transition from $\mathbf a^L$ to the central
entry $\rho$ does not contribute to $\operatorname{exc}$, while the transition from
$\rho$ to the first entry on its right, when present, gives exactly the
first term in $\operatorname{exc}(\mathbf{a}^{R})$.  By induction,
$\operatorname{exc}(\mathbf{a}^{L})$ and
$\operatorname{exc}(\mathbf{a}^{R})$ are the sums of the labels of
$\pi_L$ and $\pi_R$, respectively. Since the initial up step has label
$\rho$, we have
\[
|\lambda|
=
\rho+|\lambda_L|+|\lambda_R|
=
\operatorname{exc}(\mathbf{a})
=
r.
\]
Thus $\Phi$ restricts to a bijection between $\mathcal{A}_{\ell,r}$ and
Dyck paths of semilength $\ell$ with total label~$r$.

We next consider ascents. No ascent occurs between the last entry of
$\mathbf{a}^{L}$ and the central entry $\rho$. On the other hand, if
$\mathbf{a}^{R}$ is nonempty, then there is always an ascent from $\rho$
to the first entry on its right. Therefore,
\[
\asc(\mathbf{a})
=
\asc(\mathbf{a}^{L})
+
\asc(\mathbf{a}^{R})
+
\mathbf{1}_{\{\mathbf{a}^{R}\neq\emptyset\}}.
\]
Here $\mathbf{1}_{\{A\}}$ denotes the indicator of the condition $A$, equal to $1$ if $A$ holds and $0$ otherwise. 

For the corresponding first-return decomposition
$\pi=U\pi_LD\pi_R$, the initial $U$ and its matching $D$ form a peak
precisely when $\pi_L$ is empty. Therefore,
\[
\pk(\pi)
=
\pk(\pi_L)
+
\pk(\pi_R)
+
\mathbf{1}_{\{\pi_L=\epsilon\}}.
\]
Using the induction hypothesis on $\pi_L$ and $\pi_R$, we obtain
\[
\pk(\pi_L)
=
\asc(\mathbf a^L)
+
\mathbf{1}_{\{\mathbf a^L\neq\emptyset\}},
\qquad
\pk(\pi_R)
=
\asc(\mathbf a^R)
+
\mathbf{1}_{\{\mathbf a^R\neq\emptyset\}}.
\]
Since $\pi_L=\epsilon$ if and only if $\mathbf a^L=\emptyset$, it follows that
\[
\pk(\pi)
=
\asc(\mathbf a^L)
+
\asc(\mathbf a^R)
+
\mathbf{1}_{\{\mathbf a^R\neq\emptyset\}}
+1
=
\asc(\mathbf a)+1.
\]
Equivalently, $\asc(\mathbf{a})=\pk(\pi)-1$.

We now consider strict right-to-left minima. The central entry $\rho$ is
a strict right-to-left minimum because every entry to its right is
strictly larger than $\rho$. No entry to its left can be a strict
right-to-left minimum, since $\rho$ occurs later in the sequence. The strict
right-to-left minima to the right of $\rho$ are precisely those of
$\mathbf{a}^{R}$. Hence $\rlmin(\mathbf{a})
=
1+\rlmin(\mathbf{a}^{R})$. On the path side, the down step following $\pi_L$ is the first return of
$\pi$, and all subsequent returns are exactly the returns of $\pi_R$.
Thus $
\ret(\pi)
=
1+\ret(\pi_R)$. The induction hypothesis now gives $\rlmin(\mathbf{a})=\ret(\pi)$.

It remains to consider levels. A level can occur inside
$\mathbf{a}^{L}$ or $\mathbf{a}^{R}$. There is never a level between
the central entry $\rho$ and the first entry to its right, since the
latter is at least $\rho+1$. Thus an additional level occurs only
between the last entry on the left and $\rho$. This happens precisely
when $\mathbf{a}^{L}$ is nonempty and its last entry is $0$. Therefore,
\[
\lev(\mathbf{a})
=
\lev(\mathbf{a}^{L})
+
\lev(\mathbf{a}^{R})
+
\begin{cases}
1, & \text{if $\mathbf{a}^{L}$ is nonempty and ends in $0$};\\
0, & \text{otherwise}.
\end{cases}
\]

We claim that $\mathbf{a}^{L}$ is nonempty and ends in $0$ if and only if
$\pi_L$ is nonempty and primitive and its initial up step has label $0$.
Indeed, suppose first that $\mathbf{a}^{L}$ ends in $0$. Since all its
entries are nonnegative, its minimum is $0$, so the initial up step of
$\pi_L$ has label $0$. Moreover, the final $0$ is the only strict
right-to-left minimum of $\mathbf{a}^{L}$. By the result just proved,
$\ret(\pi_L)=1$, and hence $\pi_L$ is primitive. Conversely, the same argument in reverse applies: if $\pi_L$ is primitive
and its initial up step has label $0$, then $\mathbf{a}^{L}$ has minimum
$0$ and exactly one strict right-to-left minimum, which forces its final
entry to be $0$.

It follows that the additional term in the recurrence for
$\lev(\mathbf{a})$ is precisely
$\delta(\pi_L,\lambda_L)$. Using the induction hypothesis gives
\[
\lev(\mathbf{a})
=
\lv(\pi_L,\lambda_L)
+
\lv(\pi_R,\lambda_R)
+
\delta(\pi_L,\lambda_L)
=
\lv(\pi,\lambda).
\]

Finally, there are $C_\ell$ Dyck paths of semilength $\ell$. Once such a
path is fixed, assigning nonnegative labels of total sum $r$ to its
$\ell$ up steps is equivalent to a weak composition of $r$ into $\ell$
parts. There are $\binom{r+\ell-1}{\ell-1}$ such labelings. Consequently,
\[
\lvert\mathcal{A}_{\ell,r}\rvert
=
C_\ell\binom{r+\ell-1}{\ell-1}. \qedhere
\]
\end{proof}

For a concrete illustration of the bijection in Lemma~3.1, consider
$\mathbf a=(4,2,3,1,3,1,3,3)$. Figure~\ref{fig:decomposition-tree-dyck} shows its recursive decomposition
and the corresponding labeled Dyck path. At each node, the highlighted
entry is the rightmost occurrence of the minimum $\rho$. To read the tree,
we write an up step when visiting a node, recursively traverse its left
subtree, write a down step, and then traverse its right subtree. Thus each
node contributes exactly one pair $U,D$, with $\rho$ giving the label of
the corresponding up step. The numbering $\#1,\ldots,\#8$ records the
preorder traversal, which is precisely the order in which the up steps are
encountered.

Thus $\pi=UUUUDDUDDUDDUUDD$ and  $\lambda=(1,0,1,2,0,1,1,0)$. The statistics in Lemma~3.1 can be checked directly. Indeed,
\[
\exc(\mathbf a)
=
4+(3-2-1)_+ +(3-1-1)_+ +(3-1-1)_+
=
6,
\]
while $|\lambda|=1+0+1+2+0+1+1+0=6$. Moreover,
\[
\asc(\mathbf a)=3,\qquad
\lev(\mathbf a)=1,\qquad
\rlmin(\mathbf a)=2,
\]
whereas the corresponding Dyck path satisfies
\[
\pk(\pi)=4,\qquad
\lv(\pi,\lambda)=1,\qquad
\ret(\pi)=2.
\]
Notice also that the label of the initial up step is $1$, which is
precisely the minimum entry of $\mathbf a$.

\begin{figure}[ht!]
\centering

%------------------------------------------------
% Recursive decomposition tree
%------------------------------------------------
\begin{minipage}{0.48\textwidth}
\centering
\begin{forest}
for tree={
    draw,
    rounded corners,
    align=center,
    parent anchor=south,
    child anchor=north,
    l sep=10mm,
    s sep=7mm,
    inner sep=2pt,
    font=\scriptsize
}
[{$\#1$\\
$(4,2,3,1,3,\highlightmin{1},3,3)$\\
$\rho=1$}
    [{$\#2$\\
    $(3,1,2,\highlightmin{0},2)$\\
    $\rho=0$}
        [{$\#3$\\
        $(3,\highlightmin{1},2)$\\
        $\rho=1$}
            [{$\#4$\\
            $(\highlightmin{2})$\\
            $\rho=2$}]
            [{$\#5$\\
            $(\highlightmin{0})$\\
            $\rho=0$}]
        ]
        [{$\#6$\\
        $(\highlightmin{1})$\\
        $\rho=1$}]
    ]
    [{$\#7$\\
    $(1,\highlightmin{1})$\\
    $\rho=1$}
        [{$\#8$\\
        $(\highlightmin{0})$\\
        $\rho=0$}]
    ]
]
\end{forest}
\end{minipage}
\hfill
%------------------------------------------------
% Corresponding labeled Dyck path
%------------------------------------------------
\begin{minipage}{0.49\textwidth}
\centering

\begin{tikzpicture}[scale=0.43]

    % Dyck path
    \draw[very thick]
    (0,0)
    --(1,1)
    --(2,2)
    --(3,3)
    --(4,4)
    --(5,3)
    --(6,2)
    --(7,3)
    --(8,2)
    --(9,1)
    --(10,2)
    --(11,1)
    --(12,0)
    --(13,1)
    --(14,2)
    --(15,1)
    --(16,0);

    % Horizontal axis
    \draw[gray] (0,0)--(16,0);

    % Vertices of the Dyck path
    \foreach \x/\y in {
        0/0,
        1/1,
        2/2,
        3/3,
        4/4,
        5/3,
        6/2,
        7/3,
        8/2,
        9/1,
        10/2,
        11/1,
        12/0,
        13/1,
        14/2,
        15/1,
        16/0
    }
    {
        \fill (\x,\y) circle (2pt);
    }

    % Labels of the up steps
    \node[above] at (0.5,0.5)  {\scriptsize $1$};
    \node[above] at (1.5,1.5)  {\scriptsize $0$};
    \node[above] at (2.5,2.5)  {\scriptsize $1$};
    \node[above] at (3.5,3.5)  {\scriptsize $2$};
    \node[above] at (6.5,2.5)  {\scriptsize $0$};
    \node[above] at (9.5,1.5)  {\scriptsize $1$};
    \node[above] at (12.5,0.5) {\scriptsize $1$};
    \node[above] at (13.5,1.5) {\scriptsize $0$};

    % Preorder indices
    \node[below left] at (1.5,0.5)  {\tiny $\#1$};
    \node[below left] at (1.5+1,1.5)  {\tiny $\#2$};
    \node[below left] at (2.5+1,2.5)  {\tiny $\#3$};
    \node[below left] at (3.5+1,3.5)  {\tiny $\#4$};
    \node[below left] at (6.5+1,2.5)  {\tiny $\#5$};
    \node[below left] at (9.5+1,1.5)  {\tiny $\#6$};
    \node[below left] at (12.5+1,0.5) {\tiny $\#7$};
    \node[below left] at (13.5+1,1.5) {\tiny $\#8$};

\end{tikzpicture}

\vspace{2mm}

{\scriptsize
$\pi=UUUUDDUDDUDDUUDD$
}

\vspace{1mm}

{\scriptsize
$\lambda=(1,0,1,2,0,1,1,0)$
}

\end{minipage}

\caption{Recursive decomposition tree for
$\mathbf a=(4,2,3,1,3,1,3,3)$ and the corresponding labeled Dyck path.}
\label{fig:decomposition-tree-dyck}
\end{figure}

The recursive construction in Lemma~\ref{lem:decorated-Dyck} also has a
natural binary-tree interpretation. Using the same decomposition
$\mathbf a=(\rho+\mathbf a^L,\rho,\rho+1+\mathbf a^R)$, define
$\mathcal T(\mathbf a)$ recursively as the rooted plane binary tree whose
root has label $\rho$, with left and right subtrees
$\mathcal T(\mathbf a^L)$ and $\mathcal T(\mathbf a^R)$, respectively.
The empty sequence corresponds to the empty tree.

The inorder traversal of $\mathcal T(\mathbf a)$ recovers the original
order of the entries of $\mathbf a$: its $i$th node arises from $a_i$,
although its label need not equal $a_i$. Under the standard bijection
between rooted plane binary trees and Dyck paths, $\mathcal T(\mathbf a)$
corresponds precisely to the Dyck path $\pi$ in
Lemma~\ref{lem:decorated-Dyck}, while the node labels read in preorder
give the labels of the up steps of $\pi$. Figure~\ref{fig:tree}
illustrates the construction for
$\mathbf a=(4,2,3,1,3,1,3,3)$.

\begin{figure}[ht!]
\centering
\begin{tikzpicture}[
  scale=0.8,
  transform shape,
  level distance=13mm,
  level 1/.style={sibling distance=42mm},
  level 2/.style={sibling distance=22mm},
  level 3/.style={sibling distance=12mm},
  every node/.style={circle,draw,minimum size=7mm,inner sep=1pt},
  edge from parent/.style={draw,-}
]
\node {$1$}
  child {
    node {$0$}
    child {
      node {$1$}
      child { node {$2$} }
      child { node {$0$} }
    }
    child {
      node {$1$}
    }
  }
  child {
    node {$1$}
    child {
      node {$0$}
    }
  };
\end{tikzpicture}
\caption{The labeled binary tree associated with the sequence
$\mathbf a=(4,2,3,1,3,1,3,3)$.}
\label{fig:tree}
\end{figure}

There is also a direct reconstruction of the sequence from the labeled tree. Let $v_1,\ldots,v_\ell$ be the nodes of $T(a)$ read in inorder.
For a node $v$, let $\operatorname{rdep}(v)$ denote the number of right
edges on the path from the root to $v$, and set
\[
\Lambda_T(v)=\sum_{w\in P_T(v)}\lambda(w),
\]
where $P(v)$ is the set of nodes on this path, including its endpoints.  We omit the subscript $T$ when there is no ambiguity. For example, in Figure~\ref{fig:tree}, the nodes $v_1$ and $v_3$
correspond to $a_1=4$ and $a_3=3$, respectively. Their root paths have
labels $1,0,1,2$ and $1,0,1,0$, and contain respectively zero and one
right edges. Hence
\[
\begin{aligned}
a_1&=\operatorname{rdep}(v_1)+\Lambda(v_1)
    =0+(1+0+1+2)=4,\\
a_3&=\operatorname{rdep}(v_3)+\Lambda(v_3)
    =1+(1+0+1+0)=3.
\end{aligned}
\]
In general, $a_i=\operatorname{rdep}(v_i)+\Lambda(v_i)$, for $1\leq i\leq\ell$. Thus the labeled tree determines the sequence uniquely.

The statistics of Lemma~\ref{lem:decorated-Dyck} also have simple tree
interpretations:
\begin{align*}
\operatorname{exc}(\mathbf a)
&=\sum_{v\in T}\lambda(v),\\
\operatorname{asc}(\mathbf a)
&=\bigl|\{v\in T:v\text{ has no left child}\}\bigr|-1,\\
\operatorname{rlmin}(\mathbf a)
&=\bigl|\operatorname{Rspine}(T)\bigr|,
\end{align*}
where $\operatorname{Rspine}(T)$ denotes the right spine of $T$.
Moreover, $\operatorname{lev}(\mathbf a)$ counts the nodes $v$ whose
left child $w$ has no right child and satisfies $\lambda(w)=0$.
For the tree in Figure~\ref{fig:tree}, the right spine consists of the
root and its right child, so $\operatorname{rlmin}(\mathbf a)=2$.

We now apply Lemma~\ref{lem:decorated-Dyck} to Fibonacci polyominoes.
The following result gives a combinatorial interpretation of the two
terms in Theorem~\ref{thm:perimeter-coefficients}.

\begin{theorem}\label{thm:Dyck-polyomino-bijection}
Let $m\geq 1$ and $n\geq 2$. Then
\[
p(m,n)
=
[x^my^n]P(x,y)
=
C_m\binom{n-2}{m-1}
+
C_{m-1}\binom{n-2}{m-2}.
\]
Moreover, the first term counts the Fibonacci polyominoes whose last
column has height at least $2$, while the second term counts those whose
last column has height $1$. In particular,
$p(n,n)=C_{n-1}$, 
for every $n\geq 2$.
\end{theorem}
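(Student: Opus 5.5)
We split the Fibonacci polyominoes in $\F_{m,n}$ according to the height $h_m$ of the last column, and count each class through the sequence encoding of Lemma~\ref{lem:decorated-Dyck}. The whole argument rests on formula \eqref{eq:yper-heights} for $\yper$ in terms of the column heights.

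\emph{First class: $h_m\geq 2$.} We map $P$ to the reversed, shifted sequence $\mathbf a=(h_m-2,h_{m-1}-2,\ldots,h_1-2)$. This is a sequence of $m$ nonnegative integers, since every non-last column has height at least $2$ and here also $h_m\geq2$. Conversely, every such sequence yields a valid polyomino. We then rewrite \eqref{eq:yper-heights} in terms of $\mathbf a$. Writing $a_j=h_{m+1-j}-2$, each term $\max\{h_i-h_{i+1}-1,0\}$ becomes $(a_{j}-a_{j-1}-1)_+$ with $j=m+1-i$, and $h_m=a_1+2$. Hence
\[
\yper(P)=m-1+a_1+2+\sum_{j=2}^{m}(a_j-a_{j-1}-1)_+=m+1+\operatorname{exc}(\mathbf a).
\]
So $\yper(P)=n$ exactly when $\mathbf a\in\mathcal A_{m,n-m-1}$, and Lemma~\ref{lem:decorated-Dyck} gives $C_m\binom{n-m-1+m-1}{m-1}=C_m\binom{n-2}{m-1}$. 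This index bookkeeping is the step to check most carefully. The case $m=1$ fits: there is one column of height $n\geq2$, giving $\mathcal A_{1,n-2}$ of size $1$.

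\emph{Second class: $h_m=1$.} Here $m\geq2$, since a single column of height $1$ has $n=1$. We delete the last column and obtain a polyomino $P'$ with $m-1$ columns, whose last column has height $h_{m-1}\geq2$. By \eqref{eq:yper-heights}, the contribution of the deleted column and the final excess term $\max\{h_{m-1}-2,0\}=h_{m-1}-2$ combine to give
\[
\yper(P)=m-1+1+\sum_{i=1}^{m-2}(\cdots)+(h_{m-1}-2),
\]
while
\[
\yper(P')=m-2+h_{m-1}+\sum_{i=1}^{m-2}(\cdots).
\]
Therefore $\yper(P)=\yper(P')$. Deletion is a bijection onto the first class with $m-1$ columns and the same $n$, whose size is $C_{m-1}\binom{n-2}{m-2}$. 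Equivalently, one can encode $P$ directly by $(h_{m-1}-2,\ldots,h_1-2)\in\mathcal A_{m-1,n-m}$.

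Summing the two classes gives the stated formula, which is consistent with Theorem~\ref{thm:perimeter-coefficients}. For $m=n$, the first term is $C_n\binom{n-2}{n-1}=0$ and the second is $C_{n-1}$. The main obstacle is only the careful verification that the excess term at the reversal boundary, namely $h_m$ versus $a_1+2$, and the deleted column's contributions match exactly. Everything else follows from Lemma~\ref{lem:decorated-Dyck}.
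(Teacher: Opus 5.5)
Your proposal is correct and follows essentially the same route as the paper: the split by $h_m\geq2$ versus $h_m=1$, the encoding $(h_m-2,\ldots,h_1-2)\in\mathcal A_{m,n-m-1}$, and Lemma~\ref{lem:decorated-Dyck}. Your deletion argument ($\yper(P)=\yper(P')$) is just a repackaging of the paper's direct encoding $(h_{m-1}-2,\ldots,h_1-2)\in\mathcal A_{m-1,n-m}$, which you also note yourself.
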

\begin{proof}
Let $P$ be a Fibonacci polyomino with $m$ columns, and let
$(h_1,h_2,\ldots,h_m)$ be its sequence of column heights. Recall that $h_i\geq 2$ for $1\leq i\leq m-1$, while $h_m\geq 1$. Moreover, the vertical semiperimeter satisfies
 (see Equation \ref{eq:yper-heights})
\[\yper(P)
=
m-1+h_m
+
\sum_{i=1}^{m-1}(h_i-h_{i+1}-1)_+.
\]
Since $P\in\F_{m,n}$, the left-hand side is $n$.

We distinguish two cases according to the height of the last column.

\medskip
\noindent
\emph{Case 1: $h_m\geq 2$.}
Define
\[
\mathbf a(P)
=
(h_m-2,h_{m-1}-2,\ldots,h_1-2).
\]
This is a sequence of $m$ nonnegative integers. By
\eqref{eq:yper-heights},
\begin{align*}
\operatorname{exc}(\mathbf a(P))
&=
h_m-2
+
\sum_{i=1}^{m-1}(h_i-h_{i+1}-1)_+=n-m-1.
\end{align*}
Thus $\mathbf a(P)\in\mathcal A_{m,n-m-1}$. Conversely, every sequence in $\mathcal A_{m,n-m-1}$ uniquely determines
the column heights of such a polyomino by reversing the sequence and adding
$2$ to each entry. Hence this is a bijection.

By Lemma~\ref{lem:decorated-Dyck}, these polyominoes are therefore in
bijection with Dyck paths of semilength $m$ whose up-step labels are
nonnegative integers with total label $n-m-1$. For each fixed Dyck path,
the number of such labelings is $\binom{(n-m-1)+m-1}{m-1}
=
\binom{n-2}{m-1}$. Since there are $C_m$ Dyck paths of semilength $m$, the number of
polyominoes in this case is $C_m\binom{n-2}{m-1}$.

\medskip
\noindent
\emph{Case 2: $h_m=1$.}
If $m=1$, then $P$ consists of a single cell, so $n=1$. Since we are
assuming $n\geq 2$, we may suppose that $m\geq 2$. Define
\[
\mathbf a(P)
=
(h_{m-1}-2,h_{m-2}-2,\ldots,h_1-2).
\]
Again this is a sequence of nonnegative integers. Since $h_m=1$ and
$h_{m-1}\geq 2$, equation~\eqref{eq:yper-heights} gives
\begin{align*}
n
&=
m
+
(h_{m-1}-2)
+
\sum_{i=1}^{m-2}(h_i-h_{i+1}-1)_+,
\end{align*}
and therefore $\operatorname{exc}(\mathbf a(P))
=
n-m$. Thus
$\mathbf a(P)\in\mathcal A_{m-1,n-m}$. As in the first case, this construction is reversible.

Lemma~\ref{lem:decorated-Dyck} now gives a bijection with Dyck paths of
semilength $m-1$ whose up-step labels have total sum $n-m$. For each
fixed path, the number of such labelings is $
\binom{(n-m)+(m-1)-1}{m-2}
=
\binom{n-2}{m-2}$. Since there are $C_{m-1}$ Dyck paths of semilength $m-1$, this case
contains $C_{m-1}\binom{n-2}{m-2}$ 
polyominoes. Adding the two disjoint cases gives the desired result. 
\end{proof}

For example, consider $(m,n)=(2,5)$. By
Theorem~\ref{thm:Dyck-polyomino-bijection},
\[
p(2,5)
=
C_2\binom{3}{1}
+
C_1\binom{3}{0}
=
6+1
=
7.
\]
Table~\ref{tab:mn25} lists the seven Fibonacci polyominoes in
$\F_{2,5}$, together with the associated sequence $\mathbf a(P)$,
Dyck path $\pi$, and label sequence $\lambda$. If $h_2\geq2$, then $\mathbf a(P)\in\mathcal A_{2,2}$.
Thus each of the $C_2=2$ Dyck paths of semilength $2$ occurs with
the three possible label sequences of total weight $2$, namely
$(2,0)$, $(1,1)$, and $(0,2)$. If $h_2=1$, then
$\mathbf a(P)\in\mathcal A_{1,3}$, giving the unique Dyck path of
semilength $1$ with label $3$.

\begin{table}[ht!]
\centering
\small
\tikzset{cell/.style={draw=black, very thick, fill=cyan!10!white}}
\renewcommand{\arraystretch}{1.4}
\setlength{\tabcolsep}{6pt}
\begin{tabular}{c c c c c}
\hline
$P$ & $(h_1,h_2)$ & $\mathbf a(P)$ & $\pi$ & $\lambda$\\
\hline
\multicolumn{5}{c}{$h_2\geq 2$}\\
\hline
\\[-1.2em]
\begin{tikzpicture}[scale=0.22, baseline=-0.1cm,
                    line cap=round, line join=round]
\foreach \x/\y in {
0/0,0/1,0/2,0/3,
1/1,1/2,1/3,1/4}{
  \draw[cell] (\x,\y) rectangle ++(1,1);
}
\end{tikzpicture}
&
$(4,4)$
&
$(2,2)$
&
\begin{tikzpicture}[scale=0.22, baseline=-0.1cm,
                    line cap=round, line join=round]
\draw[very thick]
(0,0)--(1,1)--(2,2)--(3,1)--(4,0);
\end{tikzpicture}
&
$(2,0)$
\\

\begin{tikzpicture}[scale=0.22, baseline=-0.1cm,
                    line cap=round, line join=round]
\foreach \x/\y in {
0/0,0/1,0/2,
1/1,1/2,1/3,1/4}{
  \draw[cell] (\x,\y) rectangle ++(1,1);
}
\end{tikzpicture}
&
$(3,4)$
&
$(2,1)$
&
\begin{tikzpicture}[scale=0.22, baseline=-0.1cm,
                    line cap=round, line join=round]
\draw[very thick]
(0,0)--(1,1)--(2,2)--(3,1)--(4,0);
\end{tikzpicture}
&
$(1,1)$
\\

\begin{tikzpicture}[scale=0.22, baseline=-0.1cm,
                    line cap=round, line join=round]
\foreach \x/\y in {
0/0,0/1,
1/1,1/2,1/3,1/4}{
  \draw[cell] (\x,\y) rectangle ++(1,1);
}
\end{tikzpicture}
&
$(2,4)$
&
$(2,0)$
&
\begin{tikzpicture}[scale=0.22, baseline=-0.1cm,
                    line cap=round, line join=round]
\draw[very thick]
(0,0)--(1,1)--(2,2)--(3,1)--(4,0);
\end{tikzpicture}
&
$(0,2)$
\\

\hline
\\[-1.2em]
\begin{tikzpicture}[scale=0.22, baseline=-0.1cm,
                    line cap=round, line join=round]
\foreach \x/\y in {
0/0,0/1,0/2,0/3,0/4,
1/1,1/2,1/3,1/4}{
  \draw[cell] (\x,\y) rectangle ++(1,1);
}
\end{tikzpicture}
&
$(5,4)$
&
$(2,3)$
&
\begin{tikzpicture}[scale=0.22, baseline=-0.1cm,
                    line cap=round, line join=round]
\draw[very thick]
(0,0)--(1,1)--(2,0)--(3,1)--(4,0);
\end{tikzpicture}
&
$(2,0)$
\\

\begin{tikzpicture}[scale=0.22, baseline=-0.1cm,
                    line cap=round, line join=round]
\foreach \x/\y in {
0/0,0/1,0/2,0/3,0/4,
1/1,1/2,1/3}{
  \draw[cell] (\x,\y) rectangle ++(1,1);
}
\end{tikzpicture}
&
$(5,3)$
&
$(1,3)$
&
\begin{tikzpicture}[scale=0.22, baseline=-0.1cm,
                    line cap=round, line join=round]
\draw[very thick]
(0,0)--(1,1)--(2,0)--(3,1)--(4,0);
\end{tikzpicture}
&
$(1,1)$
\\

\begin{tikzpicture}[scale=0.22, baseline=-0.1cm,
                    line cap=round, line join=round]
\foreach \x/\y in {
0/0,0/1,0/2,0/3,0/4,
1/1,1/2}{
  \draw[cell] (\x,\y) rectangle ++(1,1);
}
\end{tikzpicture}
&
$(5,2)$
&
$(0,3)$
&
\begin{tikzpicture}[scale=0.22, baseline=-0.1cm,
                    line cap=round, line join=round]
\draw[very thick]
(0,0)--(1,1)--(2,0)--(3,1)--(4,0);
\end{tikzpicture}
&
$(0,2)$
\\

\hline
\multicolumn{5}{c}{$h_2=1$}\\
\hline
\\[-1.2em]
\begin{tikzpicture}[scale=0.22, baseline=-0.1cm,
                    line cap=round, line join=round]
\foreach \x/\y in {
0/0,0/1,0/2,0/3,0/4,
1/1}{
  \draw[cell] (\x,\y) rectangle ++(1,1);
}
\end{tikzpicture}
&
$(5,1)$
&
$(3)$
&
\begin{tikzpicture}[scale=0.22, baseline=-0.1cm,
                    line cap=round, line join=round]
\draw[very thick]
(0,0)--(1,1)--(2,0);
\end{tikzpicture}
&
$(3)$
\\

\hline\\[-1.2em]
\end{tabular}
\caption{The bijection of
Theorem~\ref{thm:Dyck-polyomino-bijection} for $(m,n)=(2,5)$.}
\label{tab:mn25}
\end{table}

\section{Further Refinements}\label{sec:further-refinements}

In this section, we use the bijection of
Lemma~\ref{lem:decorated-Dyck} to refine the enumeration of Fibonacci
polyominoes simultaneously by descents, left-to-right minima, and
levels. The first refinement is governed by the joint distribution of
peaks and returns on Dyck paths, while the second leads naturally to
Motzkin numbers.

\subsection{The joint distribution of descents and left-to-right minima}
\label{subsec:descents-minima}

Let $P$ be a Fibonacci polyomino with column heights
$(h_1,\ldots,h_m)$. Define
\[
\des(P)
=
\bigl|\{i:1\leq i<m,\ h_i>h_{i+1}\}\bigr|.
\]
A column $i\geq2$ is a \emph{left-to-right minimum} if
\[
h_i<\min\{h_1,\ldots,h_{i-1}\},
\]
and the first column is counted by convention. We denote by $\lmin(P)$
the number of left-to-right minima of $P$.

The correspondence in Lemma~\ref{lem:decorated-Dyck} translates these
two statistics into peaks and returns of Dyck paths. We use the joint
distribution of these statistics given by Deutsch
\cite[Section~6.5, Eq.~(6.20)]{DeutschDyck}.

For $\ell\geq1$, let $R(\ell,k,j)$ denote the number of Dyck paths of
semilength $\ell$ with exactly $k$ peaks and $j$ returns. Then
\[
R(\ell,k,j)
=
\begin{cases}
\displaystyle
\frac{j}{\ell}
\binom{\ell}{k}
\binom{\ell-j-1}{k-j},
& 1\leq j\leq k\leq\ell,\quad j<\ell,\\[3mm]
1,
& k=j=\ell,\\
0,
& \text{otherwise}.
\end{cases}
\]
Its two marginal distributions are
\begin{align*}
\sum_{j\geq1}R(\ell,k,j)
&=
N(\ell,k)
=
\frac{1}{\ell}
\binom{\ell}{k}
\binom{\ell}{k-1},\\
\sum_{k\geq1}R(\ell,k,j)
&=
B(\ell,j)
=
\frac{j}{2\ell-j}
\binom{2\ell-j}{\ell}.
\end{align*}
Here $N(\ell,k)$ is the Narayana number, while $B(\ell,j)$ counts Dyck
paths of semilength $\ell$ with exactly $j$ returns (cf. \cite{Deutsch98}). We set
$N(\ell,k)=0$ and $B(\ell,j)=0$ outside their natural ranges, and
$B(0,0)=1$.

\begin{theorem}\label{thm:joint-descents-minima}
Let $m,n\geq2$, $d\geq0$, and $j\geq1$. Then
\begin{align*}
&\bigl|
\{P\in\F_{m,n}:
\des(P)=d,\,
\lmin(P)=j\}
\bigr|\\
&\qquad =
R(m,d+1,j)
\binom{n-2}{m-1}
+
R(m-1,d,j-1)
\binom{n-2}{m-2}.
\end{align*}
\end{theorem}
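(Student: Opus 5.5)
We follow the two-case decomposition in the proof of Theorem~\ref{thm:Dyck-polyomino-bijection} and track how $\des$ and $\lmin$ transform under the reversal map $P\mapsto\mathbf a(P)$. Reversing the sequence of heights and subtracting $2$ turns descents of $(h_1,\ldots,h_m)$ into ascents of $\mathbf a(P)$. It also turns (strict) left-to-right minima of the heights into strict right-to-left minima of $\mathbf a(P)$. The first column is always counted in $\lmin(P)$, and it corresponds to the last entry of $\mathbf a(P)$, which is vacuously a strict right-to-left minimum. So the conventions match. Lemma~\ref{lem:decorated-Dyck} then converts ascents into $\pk(\pi)-1$ and right-to-left minima into $\ret(\pi)$. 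The label condition only involves $|\lambda|$ and is independent of the path shape.

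\emph{Case 1: $h_m\ge2$.} Here $\mathbf a(P)=(h_m-2,\ldots,h_1-2)\in\mathcal A_{m,n-m-1}$. We have $\des(P)=\asc(\mathbf a(P))=\pk(\pi)-1$ and $\lmin(P)=\rlmin(\mathbf a(P))=\ret(\pi)$. So the polyominoes with $\des=d$ and $\lmin=j$ correspond to pairs $(\pi,\lambda)$ in which $\pi$ has semilength $m$, $d+1$ peaks and $j$ returns, and $\lambda$ is any weak composition of $n-m-1$ into $m$ parts. This gives $R(m,d+1,j)\binom{n-2}{m-1}$.

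\emph{Case 2: $h_m=1$, so $m\ge2$.} Here $\mathbf a(P)=(h_{m-1}-2,\ldots,h_1-2)\in\mathcal A_{m-1,n-m}$. Since $h_{m-1}\ge2>1=h_m$, the pair $(m-1,m)$ is always a descent, so $\des(P)=\asc(\mathbf a(P))+1=\pk(\pi)$. The last column has height $1$, strictly smaller than every earlier height, so it is always a left-to-right minimum. The remaining left-to-right minima are those of $(h_1,\ldots,h_{m-1})$, so $\lmin(P)=\rlmin(\mathbf a(P))+1=\ret(\pi)+1$. Hence this case contributes $R(m-1,d,j-1)\binom{n-2}{m-2}$, since the number of labelings is $\binom{(n-m)+(m-2)}{m-2}$. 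Adding the two cases gives the formula.

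The main point needing care is the boundary bookkeeping. We must check that the convention "first column counted" agrees with the strict right-to-left minimum convention for the last entry of $\mathbf a(P)$. We must also treat the degenerate sub-case $m=2$ in Case~2, where $\pi$ has semilength $1$ and $R(1,1,1)=1$ (the case $k=j=\ell$). Finally, when $d=0$ or $j=1$ the Case~2 term vanishes automatically, because $R(m-1,0,\cdot)=R(m-1,\cdot,0)=0$ for $m-1\ge1$. This is consistent with the fact that Case~2 forces at least one descent and at least two left-to-right minima.
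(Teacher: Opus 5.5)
Your proposal is correct and follows the paper's proof essentially verbatim. You split on $h_m\geq 2$ versus $h_m=1$, translate $\des$ and $\lmin$ into $\asc$ and $\rlmin$ of $\mathbf a(P)$ (adding one descent and one left-to-right minimum for the final column of height $1$ in the second case), and then apply Lemma~\ref{lem:decorated-Dyck} together with the weak-composition count of labelings.

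Your extra boundary checks are also correct: the matching of the first-column convention with the last entry of $\mathbf a(P)$, the case $m=2$ via $R(1,1,1)=1$, and the automatic vanishing of the second term when $d=0$ or $j=1$. The paper leaves these implicit.
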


\begin{proof}
We distinguish the two cases in the proof of
Theorem~\ref{thm:Dyck-polyomino-bijection}. Suppose first that
$h_m\geq2$. Then
\[
\mathbf a(P)
=
(h_m-2,h_{m-1}-2,\ldots,h_1-2)
\in\mathcal A_{m,n-m-1}.
\]
Since the order of the column heights is reversed, descents of $P$
become ascents of $\mathbf a(P)$, while left-to-right minima become
strict right-to-left minima. Hence
\[
\des(P)=\asc(\mathbf a(P)),
\qquad
\lmin(P)=\rlmin(\mathbf a(P)).
\]
By Lemma~\ref{lem:decorated-Dyck}, a polyomino with $\des(P)=d$ and
$\lmin(P)=j$ corresponds to a Dyck path of semilength $m$ with $d+1$
peaks and $j$ returns. There are $R(m,d+1,j)$ such paths. Since the
labels have total weight $n-m-1$, each path admits
$\binom{n-2}{m-1}$ labelings. This gives the first term.

Now suppose that $h_m=1$. Then
\[
\mathbf a(P)
=
(h_{m-1}-2,\ldots,h_1-2)
\in\mathcal A_{m-1,n-m}.
\]
Since $h_{m-1}\geq2$, the last pair of columns contributes a descent,
and the last column contributes a new left-to-right minimum. Therefore,
\[
\des(P)=\asc(\mathbf a(P))+1,
\qquad
\lmin(P)=\rlmin(\mathbf a(P))+1.
\]
By Lemma~\ref{lem:decorated-Dyck}, the corresponding Dyck path has $d$
peaks and $j-1$ returns. There are $R(m-1,d,j-1)$ such paths. Since the
labels have total weight $n-m$, each path admits
$\binom{n-2}{m-2}$ labelings. This gives the second term.
\end{proof}

Summing Theorem~\ref{thm:joint-descents-minima} over $j$ gives the
descent distribution.

\begin{coro}\label{cor:descent-distribution}
For $m,n\geq2$ and $d\geq0$,
\[
\bigl|
\{P\in\F_{m,n}:\des(P)=d\}
\bigr|
=
N(m,d+1)\binom{n-2}{m-1}
+
N(m-1,d)\binom{n-2}{m-2}.
\]
For $m=1$, the unique polyomino in $\F_{1,n}$ has no descents.
\end{coro}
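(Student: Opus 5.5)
The plan is to sum the identity of Theorem~\ref{thm:joint-descents-minima} over all admissible values of $j$, and then use the marginal distribution of $R$ recorded before that theorem. Fix $m,n\geq2$ and $d\geq0$. Since $\lmin(P)\geq1$ for every polyomino (the first column always counts), the set $\{P\in\F_{m,n}:\des(P)=d\}$ is the disjoint union over $j\geq1$ of the sets counted in Theorem~\ref{thm:joint-descents-minima}. Summing gives
\[
\sum_{j\geq1}R(m,d+1,j)\binom{n-2}{m-1}+\sum_{j\geq1}R(m-1,d,j-1)\binom{n-2}{m-2}.
\]
The first sum equals $N(m,d+1)\binom{n-2}{m-1}$ by the stated marginal $\sum_{j\geq1}R(\ell,k,j)=N(\ell,k)$ with $\ell=m$ and $k=d+1$.

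For the second sum, reindex by $j'=j-1\geq0$. The $j'=0$ term vanishes except in degenerate cases, so this step needs care with boundary conventions, and it is where I expect the only real obstacle.

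When $m-1\geq1$, every nonempty Dyck path has at least one return. Hence $R(m-1,d,0)=0$ by the ``otherwise'' clause, and the sum equals $\sum_{j'\geq1}R(m-1,d,j')=N(m-1,d)$. This also covers $d=0$: then $R(m-1,0,j')=0$ for all $j'$, and $N(m-1,0)=0$ with the zero convention outside the natural range. This is consistent, since a polyomino whose last column has height $1$ always has a descent.

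For $m=1$ the corollary separately asserts that the single polyomino in $\F_{1,n}$ has no descents. This is immediate from the definition of $\des$, since there is no pair of consecutive columns. Note that $m\geq2$ in the main statement, so $m-1\geq1$ and no empty-path issue arises there.

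As a sanity check, I would confirm the case $(m,n)=(2,5)$ against Table~\ref{tab:mn25}. For $d=0$ the formula gives $N(2,1)\cdot3+N(1,0)\cdot1=3$; these are the three polyominoes with $h_1\leq h_2$. For $d=1$ it gives $N(2,2)\cdot 3+N(1,1)\cdot 1=4$.

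Alternatively, the corollary can be proved directly from Lemma~\ref{lem:decorated-Dyck}, using only $\asc(\mathbf a)=\pk(\pi)-1$ together with the two cases in the proof of Theorem~\ref{thm:Dyck-polyomino-bijection}. The Narayana numbers $N(\ell,k)$ count Dyck paths of semilength $\ell$ with $k$ peaks, and the label count is independent of the path. This would bypass the joint distribution entirely; I would mention it as a remark.
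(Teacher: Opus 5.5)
Your proposal is correct and follows the paper's route: the paper obtains this corollary by summing Theorem~\ref{thm:joint-descents-minima} over $j$ and using the marginal $\sum_{j\geq1}R(\ell,k,j)=N(\ell,k)$. Your check that the shifted term $R(m-1,d,0)$ vanishes, and that the case $d=0$ agrees with $N(m-1,0)=0$, makes explicit a boundary detail the paper leaves implicit.
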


Similarly, summing over $d$ gives the distribution of left-to-right
minima.

\begin{coro}\label{cor:minima-distribution}
For $m,n\geq2$ and $j\geq1$,
\[
\bigl|
\{P\in\F_{m,n}:\lmin(P)=j\}
\bigr|
=
B(m,j)\binom{n-2}{m-1}
+
B(m-1,j-1)\binom{n-2}{m-2}.
\]
For $m=1$, the unique polyomino in $\F_{1,n}$ has one
left-to-right minimum.
\end{coro}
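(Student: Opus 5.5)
The statement to prove is Corollary~\ref{cor:minima-distribution}. The plan is to sum the identity of Theorem~\ref{thm:joint-descents-minima} over all $d\geq 0$ with $j$ fixed. The left-hand sides are disjoint pieces of $\{P\in\F_{m,n}:\lmin(P)=j\}$, indexed by the value of $\des(P)$. Their union is the whole set, since $\des(P)$ takes some value in $\{0,\ldots,m-1\}$. Hence the count we want is
\[
\binom{n-2}{m-1}\sum_{d\geq0}R(m,d+1,j)+\binom{n-2}{m-2}\sum_{d\geq0}R(m-1,d,j-1).
\]

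For the first sum, reindex with $k=d+1$, so it becomes $\sum_{k\geq1}R(m,k,j)$, which equals $B(m,j)$ by the stated marginal formula. For the second sum, the only care needed is at the boundary values.

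When $m-1\geq1$ and $j-1\geq1$, the term $d=0$ vanishes, because a nonempty Dyck path has at least one peak. The sum is then $\sum_{k\geq1}R(m-1,k,j-1)=B(m-1,j-1)$.

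When $j=1$, the second term must vanish on both sides. On the right this holds because $B(m-1,0)=0$ for $m-1\geq1$. On the left it holds because $R(m-1,d,0)=0$ for a nonempty path. This is consistent with the combinatorics: when $h_m=1$ the last column is always a new left-to-right minimum, so $\lmin(P)\geq2$.

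Since $m\geq2$ we have $m-1\geq1$, so the degenerate case $R(0,\cdot,\cdot)$ never arises. The convention $B(0,0)=1$ is therefore only needed for uniformity and plays no role here.

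The case $m=1$ is handled directly. $\F_{1,n}$ consists of the single column of height $n$, which has exactly one left-to-right minimum by convention.

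The main obstacle is minor: making sure the range of summation and the zero conventions for $R$ and $B$ line up at $d=0$ and $j=1$. Everything else follows from the marginal identity for $R$ quoted from Deutsch.
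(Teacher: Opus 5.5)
Your proposal is correct and follows the paper's own route: the corollary is obtained by summing Theorem~\ref{thm:joint-descents-minima} over $d$ and applying the return marginal $\sum_{k\geq1}R(\ell,k,j)=B(\ell,j)$. Your explicit handling of the $d=0$ term and of the case $j=1$ spells out boundary checks that the paper leaves implicit. In the case $j=1$, the value $B(m-1,0)=0$ matches the fact that $h_m=1$ forces $\lmin(P)\geq 2$.
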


\subsection{Levels and a refinement by Motzkin numbers}
\label{subsec:levels}
Let $P$ be a Fibonacci polyomino with column heights
$(h_1,\ldots,h_m)$. Define
\[
\lev(P)
=
\bigl|
\{i:1\leq i<m,\ h_i=h_{i+1}\}
\bigr|.
\]
In both cases of Theorem~\ref{thm:Dyck-polyomino-bijection}, the passage
from the column heights to $\mathbf a(P)$ preserves equal adjacent
entries. When $h_m=1$, the omitted last column cannot form a level,
since $h_{m-1}\geq2$. Hence $\lev(P)=\lev(\mathbf a(P))$.  By Lemma~\ref{lem:decorated-Dyck}, if
$\Phi(\mathbf a(P))=(\pi,\lambda)$, then
$\lev(P)=\lv(\pi,\lambda)$. Recall that $\lv(\pi,\lambda)$ also counts the factors $UU\sigma DD$ of $\pi$, where $\sigma$ is a possibly empty Dyck path and the second
up step is labeled $0$. This description suggests considering the
corresponding statistic on the underlying unlabeled path. For a Dyck path $\pi$, let $\operatorname{pot}(\pi)$ denote the number
of factors of the form $UU\sigma DD$, 
where $\sigma$ is a possibly empty Dyck path. We call these factors
\emph{potential levels}, since such a factor contributes to
$\lv(\pi,\lambda)$ precisely when its second up step has label $0$.

We refine the notation introduced above by letting $R(\ell,k,j,e)$
denote the number of Dyck paths of semilength $\ell$ with exactly $k$
peaks, $j$ returns, and $e$ potential factors. Thus
\[
R(\ell,k,j)
=
\sum_{e\geq0}R(\ell,k,j,e).
\]
Under the classical bijection between Dyck paths and rooted plane trees, peaks correspond to leaves, returns to the degree of the root, and
potential factors to non-root vertices of outdegree $1$. Therefore, the
enumeration obtained by Kirkpatrick and Onyeze
\cite[Theorem~4.1.4]{KirkpatrickOnyeze} gives
\[
R(\ell,k,j,e)
=
\frac{j}{\ell}
\binom{\ell}{k}
\binom{\ell-k}{e}
\binom{k-j-1}{\ell-k-e-1}.
\]
Here and below, we use the convention $\binom{-1}{-1}=1$,  while every other binomial coefficient with a negative upper or lower
entry, or with lower entry larger than its upper entry, is equal to
zero. In particular, this convention includes the boundary case $\ell-e=k=j$.

Summing over the numbers of peaks and returns yields a classical
refinement of the Motzkin numbers. Under the Dyck path--plane tree
bijection, potential levels correspond to non-root vertices of
outdegree $1$, or equivalently to non-root vertices of degree $2$.
Donaghey and Shapiro \cite[(M8), Eq.~(19)]{DonagheyShapiro}
show that deleting all such vertices from a rooted plane tree produces
a branch-reduced tree, or \emph{bush}, and that bushes with $k$ edges
are counted by $M_{k-1}$. Consequently,
\[
\sum_{k,j\geq1}R(\ell,k,j,e)
=
\binom{\ell-1}{e}M_{\ell-e-1},
\qquad
0\leq e\leq\ell-1.
\]
Indeed, after deleting the $e$ non-root vertices of outdegree $1$, one
obtains a bush with $\ell-e$ edges, and the $e$ deleted vertices can be
reinserted along its edges in $\binom{\ell-1}{e}$
ways. 

The labels provide a further refinement of this distribution. For
$\ell\geq1$, $r\geq0$, and $d,j,t\geq0$, define
\[
S_{\ell,r}(d,j,t)
=
\bigl|
\{\mathbf a\in\mathcal A_{\ell,r}:
\asc(\mathbf a)=d,\,
\rlmin(\mathbf a)=j,\,
\lev(\mathbf a)=t\}
\bigr|.
\]
We extend this notation by setting $S_{\ell,r}(d,j,t)=0$ whenever one
of the parameters lies outside its natural range; in particular, this
applies when $r<0$, $d<0$, $j<0$, or $t<0$.

\begin{proposition}\label{prop:sequence-three-statistics}
For $\ell\geq1$, $r\geq0$, and $d,j,t\geq0$,
\[
S_{\ell,r}(d,j,t)
=
\sum_{e=t}^{\ell-1}
R(\ell,d+1,j,e)
\binom{e}{t}
\binom{r+\ell-e-1}{\ell-t-1}.
\]
\end{proposition}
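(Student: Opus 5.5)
Via the bijection $\Phi$ of Lemma~\ref{lem:decorated-Dyck}, $S_{\ell,r}(d,j,t)$ equals the number of labeled Dyck paths $(\pi,\lambda)\in\mathcal D_{\ell,r}$ with $\pk(\pi)=d+1$, $\ret(\pi)=j$ and $\lv(\pi,\lambda)=t$. The first two statistics depend only on the underlying path $\pi$. We therefore fix an unlabeled Dyck path $\pi$ of semilength $\ell$ with $d+1$ peaks, $j$ returns and $e=\operatorname{pot}(\pi)$ potential levels; there are $R(\ell,d+1,j,e)$ such paths. It then suffices to show that the number of labelings $\lambda$ of $\pi$ with $|\lambda|=r$ and $\lv(\pi,\lambda)=t$ equals
\[
\binom{e}{t}\binom{r+\ell-e-1}{\ell-t-1},
\]
a quantity depending only on $\ell$, $r$, $e$ and $t$. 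Summing over $e$ gives the formula; the range $t\le e\le \ell-1$ holds because $\binom{e}{t}=0$ for $e<t$, and $\operatorname{pot}(\pi)\le\ell-1$ since the initial up step of $\pi$ is never the second up step of a factor $UU\sigma DD$.

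The key structural step is the following. Using the characterization that $\lv(\pi,\lambda)$ counts factors $UU\sigma DD$ whose second up step has label $0$, I would show that the map sending each potential-level factor to its second up step is injective. That step determines the factor uniquely: the preceding $U$ is forced, and $\sigma$ together with the closing $DD$ are forced by matching. Hence $\pi$ has a distinguished set $E$ of exactly $e$ up steps, and $\lv(\pi,\lambda)=|\{u\in E:\lambda(u)=0\}|$. Equivalently, in the plane-tree picture, $E$ corresponds to the non-root vertices of outdegree $1$.

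The labeling count is then elementary. Choose the $t$ steps of $E$ that receive label $0$, in $\binom{e}{t}$ ways. The remaining $e-t$ steps of $E$ must receive labels $\ge1$, and the other $\ell-e$ up steps receive labels $\ge0$. Subtracting $1$ from each of the $e-t$ positive labels leaves a weak composition of $r-(e-t)$ into $\ell-t$ parts. The number of these is
\[
\binom{r-e+t+\ell-t-1}{\ell-t-1}=\binom{r+\ell-e-1}{\ell-t-1},
\]
which matches the claimed factor.

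The main obstacle is boundary conventions rather than combinatorics. When $r<e-t$, the binomial must vanish: the upper entry $r+\ell-e-1$ is then less than the lower entry $\ell-t-1$, so it does, provided the upper entry is nonnegative. The remaining degenerate case is $\ell-t-1=-1$, i.e.\ $t=\ell$, which cannot occur since $t\le e\le\ell-1$. I would check these cases against the convention fixed earlier for $R(\ell,k,j,e)$, namely $\binom{-1}{-1}=1$, and make sure no spurious contribution arises.
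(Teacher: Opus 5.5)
Your proposal is correct and follows the paper's own proof. Fix an underlying path with $d+1$ peaks, $j$ returns and $e$ potential levels, choose which $t$ of the $e$ distinguished up steps receive label $0$, force the remaining $e-t$ to be positive, and count weak compositions of $r-(e-t)$ into $\ell-t$ parts. Your extra checks only make explicit what the paper leaves implicit: injectivity of factor $\mapsto$ second up step, the bound $e\le\ell-1$, and the boundary cases. In the boundary cases, $e\le\ell-1$ already forces $r+\ell-e-1\ge r\ge0$, so no $\binom{-1}{-1}$ term can ever arise.
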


\begin{proof}
Fix a Dyck path with $d+1$ peaks, $j$ returns, and $e$ potential
factors. To obtain exactly $t$ levels, choose the $t$ corresponding up
steps whose labels are $0$; the remaining $e-t$ distinguished labels
must be positive. After subtracting $1$ from each of these positive
labels, the remaining weight is $r-(e-t)$, distributed among
$\ell-t$ nonnegative labels. Hence the number of admissible labelings is
\[
\binom{e}{t}
\binom{r-(e-t)+(\ell-t)-1}{\ell-t-1}
=
\binom{e}{t}
\binom{r+\ell-e-1}{\ell-t-1}.
\]
Summing over $e$ gives the result.
\end{proof}

\begin{theorem}\label{thm:three-statistics-polyominoes}
Let $m,n\geq2$, $d,t\geq0$, and $j\geq1$. Then
\begin{align*}
&\bigl|
\{P\in\F_{m,n}:
\des(P)=d,\,
\lmin(P)=j,\,
\lev(P)=t\}
\bigr|\\
&\qquad =
S_{m,n-m-1}(d,j,t)
+
S_{m-1,n-m}(d-1,j-1,t).
\end{align*}
\end{theorem}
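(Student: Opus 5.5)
The argument splits $\F_{m,n}$ according to the height of the last column, exactly as in the proof of Theorem~\ref{thm:Dyck-polyomino-bijection}, and in each case it transfers all three statistics to the sequence $\mathbf a(P)$. It then reads off the count from the definition of $S_{\ell,r}(d,j,t)$. Lemma~\ref{lem:decorated-Dyck} and Proposition~\ref{prop:sequence-three-statistics} are not needed directly, since the statement is phrased in terms of $S$. The whole task is to check that the map $P\mapsto\mathbf a(P)$ is a bijection onto the right set $\mathcal A_{\ell,r}$ and shifts $(\des,\lmin,\lev)$ correctly.

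\emph{Case $h_m\geq2$.} Here $\mathbf a(P)=(h_m-2,\ldots,h_1-2)$, and this gives a bijection onto $\mathcal A_{m,n-m-1}$ by the proof of Theorem~\ref{thm:Dyck-polyomino-bijection}.
\begin{itemize}
\item Reversing the order turns descents $h_i>h_{i+1}$ into ascents of $\mathbf a(P)$.
\item Left-to-right minima, which are strict and include the first column, become strict right-to-left minima, and the last entry of $\mathbf a$ is always one.
\item Levels are preserved under both reversal and the shift by $-2$.
\end{itemize}
Hence $(\des,\lmin,\lev)(P)=(\asc,\rlmin,\lev)(\mathbf a(P))$, and this case contributes $S_{m,n-m-1}(d,j,t)$.

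\emph{Case $h_m=1$.} This case needs $m\geq2$, since $n\geq2$. Here $\mathbf a(P)=(h_{m-1}-2,\ldots,h_1-2)$, which is a bijection onto $\mathcal A_{m-1,n-m}$.
\begin{itemize}
\item Because $h_{m-1}\geq2>1=h_m$, the final pair of columns adds exactly one descent and no level.
\item The last column is a new strict left-to-right minimum, since all earlier heights are at least $2$.
\item The left-to-right minima among the first $m-1$ columns are exactly the strict right-to-left minima of $\mathbf a(P)$.
\end{itemize}
So $\des(P)=\asc(\mathbf a(P))+1$, $\lmin(P)=\rlmin(\mathbf a(P))+1$ and $\lev(P)=\lev(\mathbf a(P))$, as already observed in the proofs of Theorem~\ref{thm:joint-descents-minima} and in the discussion at the start of Subsection~\ref{subsec:levels}. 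This case contributes $S_{m-1,n-m}(d-1,j-1,t)$.

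Adding the two disjoint cases gives the formula. The only delicate points are the boundary conventions.

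When $d=0$, the second term must vanish. This is justified because every sequence in the second case produces a descent, and the convention $S=0$ for negative parameters covers it.

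When $j=1$, the second term would need $\rlmin(\mathbf a)=0$. This is impossible for a nonempty sequence, so $S_{m-1,n-m}(d-1,0,t)=0$ holds automatically.

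When $m=2$ in the second case, $\mathcal A_{1,n-2}$ is well defined, since $\ell=1\geq1$.

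I expect no real obstacle beyond confirming these edge cases.
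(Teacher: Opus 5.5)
Your proof is correct and follows the paper's argument: you split on $h_m\geq 2$ versus $h_m=1$, map $P$ to the reversed, shifted sequence $\mathbf a(P)$ in $\mathcal A_{m,n-m-1}$ or $\mathcal A_{m-1,n-m}$, and transfer $(\des,\lmin,\lev)$ to $(\asc,\rlmin,\lev)$, adding one descent and one left-to-right minimum in the second case. Your explicit check of the boundary cases $d=0$ and $j=1$, which the paper leaves implicit, is a welcome addition.
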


\begin{proof}
If $h_m\geq2$, then $\mathbf a(P)\in\mathcal A_{m,n-m-1}$, and
\[
\des(P)=\asc(\mathbf a(P)),
\qquad
\lmin(P)=\rlmin(\mathbf a(P)),
\qquad
\lev(P)=\lev(\mathbf a(P)).
\]
This gives the first term.

If $h_m=1$, then $\mathbf a(P)\in\mathcal A_{m-1,n-m}$. The last column contributes one descent and one new left-to-right
minimum, but no level. Hence
\[
\des(P)=\asc(\mathbf a(P))+1,
\qquad
\lmin(P)=\rlmin(\mathbf a(P))+1,
\qquad
\lev(P)=\lev(\mathbf a(P)),
\]
which gives the second term.
\end{proof}

The diagonal family takes a particularly simple form. If
$P\in\F_{n,n}$, then only the case $h_n=1$ occurs and
$\mathbf a(P)\in\mathcal A_{n-1,0}$. Thus all labels are $0$, so every
potential factor contributes a level. Consequently,
\[
S_{\ell,0}(d,j,t)
=
R(\ell,d+1,j,t).
\]

\begin{coro}
For $n\geq2$, $d\geq1$, $t\geq0$, and $j\geq2$,
\begin{align*}
&\bigl|
\{P\in\F_{n,n}:
\des(P)=d,\,
\lmin(P)=j,\,
\lev(P)=t\}
\bigr|\\
&\qquad = R(n-1,d,j-1,t)\\
&\qquad =
\frac{j-1}{n-1}
\binom{n-1}{d}
\binom{n-1-d}{t}
\binom{d-j}{n-d-t-2}.
\end{align*}
\end{coro}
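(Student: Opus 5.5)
The plan is to specialize Theorem~\ref{thm:three-statistics-polyominoes} to $m=n$ and then evaluate the surviving term with the Kirkpatrick--Onyeze formula.

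First, the two summands. The first summand is $S_{n,-1}(d,j,t)$. Its parameter $r=n-n-1=-1$ is negative, so by the stated convention it vanishes. Geometrically, this reflects the fact that a polyomino in $\F_{n,n}$ with $h_n\geq 2$ would need $\operatorname{exc}(\mathbf a(P))=-1$, which is impossible. Only the term $S_{n-1,0}(d-1,j-1,t)$ survives.

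Next, I evaluate $S_{n-1,0}$ using Proposition~\ref{prop:sequence-three-statistics} with $r=0$. The factor $\binom{r+\ell-e-1}{\ell-t-1}=\binom{\ell-e-1}{\ell-t-1}$ is nonzero only when $e\leq t$. Combined with the summation range $e\geq t$, this forces $e=t$, and then $\binom{e}{t}=1$ and the last binomial equals $1$. This is the identity $S_{\ell,0}(d,j,t)=R(\ell,d+1,j,t)$ noted just before the corollary. Combinatorially it is clear too: all labels are $0$, so every potential level is a level.

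With $\ell=n-1$ and parameters $(d-1,j-1,t)$, this gives $R(n-1,d,j-1,t)$. Substituting into the Kirkpatrick--Onyeze formula with $\ell=n-1$, $k=d$, $j\mapsto j-1$, $e=t$ gives
\[
\frac{j-1}{n-1}\binom{n-1}{d}\binom{n-1-d}{t}\binom{d-(j-1)-1}{(n-1)-d-t-1},
\]
which simplifies to the stated expression.

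The only delicate point is the boundary-case conventions. The main one is the degenerate case $\ell-e=k=j$, i.e.\ $n-1-t=d=j-1$, where $\binom{-1}{-1}=1$ must be used. A second check is that $e=t$ with $t=\ell-1$ stays inside the range $0\leq e\leq \ell-1$ of Proposition~\ref{prop:sequence-three-statistics}. The hypotheses $d\geq1$ and $j\geq2$ guarantee that the shifted parameters $d-1\geq0$ and $j-1\geq1$ lie in the natural ranges of Theorem~\ref{thm:three-statistics-polyominoes}, so no further casework is needed.
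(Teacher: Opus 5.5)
Your proposal is correct and follows the paper's own argument. Only the case $h_n=1$ survives, since the first summand is $S_{n,-1}=0$. Then $\mathbf a(P)\in\mathcal A_{n-1,0}$ forces all labels to be $0$, so every potential factor is a level and $S_{n-1,0}(d-1,j-1,t)=R(n-1,d,j-1,t)$, which the Kirkpatrick--Onyeze formula turns into the stated closed form. Your check that $r=0$ collapses the sum in Proposition~\ref{prop:sequence-three-statistics} to the single term $e=t$ is a more explicit version of the paper's one-line combinatorial remark.
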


Summing over $j$ gives the joint distribution of descents and levels.
Equivalently, this is the marginal distribution of leaves and non-root
vertices of outdegree $1$ in plane trees; see
\cite[Corollary~4.1.9]{KirkpatrickOnyeze}.

\begin{coro}\label{cor:diagonal-descents-levels}
For $n\geq2$ and $1\leq d\leq n-1$,
\[
\bigl|
\{P\in\F_{n,n}:
\des(P)=d,\,
\lev(P)=t\}
\bigr|
=
\frac{(n-2)!}
{(n-d-t)!\,(2d+t-n)!\,t!\,(n-1-d-t)!}
\]
for $\max\{0,n-2d\}\leq t\leq n-d-1$, and the number is $0$ otherwise.
\end{coro}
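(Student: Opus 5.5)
We start from the preceding corollary, which for the diagonal family gives
\[
\bigl|\{P\in\F_{n,n}:\des(P)=d,\ \lmin(P)=j,\ \lev(P)=t\}\bigr|=R(n-1,d,j-1,t),
\]
and sum over $j$. Writing $\ell=n-1$, $k=d$ and $i=j-1\geq 1$, we need to evaluate
\[
\sum_{i\geq1}R(\ell,k,i,t)=\frac{1}{\ell}\binom{\ell}{k}\binom{\ell-k}{t}\sum_{i\geq1} i\binom{k-i-1}{\ell-k-t-1}.
\]
Set $s=\ell-k-t-1$. The inner sum $\sum_i i\binom{k-1-i}{s}$ is a standard hockey-stick convolution. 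Substituting $u=k-1-i$, it becomes $\sum_{u}(k-1-u)\binom{u}{s}$ for $0\le u\le k-2$. This equals $\binom{k}{s+2}$, by the Vandermonde-type identity $\sum_u \binom{k-1-u}{1}\binom{u}{s}=\binom{k}{s+2}$.

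The boundary case $s=-1$ (that is, $\ell-t=k$, so every non-peak up step is a potential level) must be checked separately, using the paper's convention $\binom{-1}{-1}=1$. There only $i=k$ contributes, giving $k$. This agrees with $\binom{k}{1}=k$, so the formula $\binom{k}{s+2}$ holds uniformly. I expect this convention bookkeeping to be the only delicate step.

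This yields
\[
\frac{1}{\ell}\binom{\ell}{k}\binom{\ell-k}{t}\binom{k}{\ell-k-t+1}.
\]
Expanding into factorials with $\ell=n-1$ and $k=d$:
\[
\frac{(n-2)!}{d!\,(n-1-d)!}\cdot\frac{(n-1-d)!}{t!\,(n-1-d-t)!}\cdot\frac{d!}{(n-d-t)!\,(2d+t-n)!}.
\]
Here $\ell-k-t+1=n-d-t$ and $k-(n-d-t)=2d+t-n$. After cancellation this is exactly the claimed expression.

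Finally, the range comes from nonvanishing of the binomials. We need $2d+t-n\ge0$ and $t\ge0$, which give $t\ge\max\{0,n-2d\}$. We also need $n-1-d-t\ge0$, which gives $t\le n-d-1$. The remaining factorial $n-d-t$ is then automatically at least $1$. Outside this range some binomial vanishes, so the count is $0$.

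This also covers $d$ in $1\le d\le n-1$, matching the hypothesis $d\ge1$ of the preceding corollary. Alternatively, the result could be cited directly as the leaves/outdegree-one marginal of Kirkpatrick and Onyeze, but the summation above keeps the argument self-contained.
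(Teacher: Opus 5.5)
Your proof is correct and follows the paper's route of summing the preceding corollary over $j$. The paper simply cites Kirkpatrick--Onyeze (Corollary~4.1.9) for the resulting closed form. You instead evaluate $\sum_{i\geq 1} i\binom{k-1-i}{s}=\binom{k}{s+2}$ yourself, which holds for $s\geq -1$ including the $\binom{-1}{-1}$ boundary case, so your argument is self-contained.

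One small detail should be stated explicitly. At $s=-2$ this identity fails, since the left side is $0$ while $\binom{k}{0}=1$. There, however, $t=\ell-k+1>\ell-k$, so the prefactor $\binom{\ell-k}{t}$ already vanishes and the count is $0$, consistent with your range.
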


Finally, summing over both descents and left-to-right minima and using
the Motzkin marginal above gives the distribution of levels.

\begin{coro}\label{cor:levels-Motzkin}
For $n\geq2$ and $0\leq t\leq n-2$,
\[
\bigl|
\{P\in\F_{n,n}:\lev(P)=t\}
\bigr|
=
\binom{n-2}{t}M_{n-2-t}.
\]
The number is $0$ for $t>n-2$. In particular,
\[
\bigl|
\{P\in\F_{n,n}:\lev(P)=0\}
\bigr|
=
M_{n-2}.
\]
\end{coro}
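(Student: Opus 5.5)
The plan is to reduce the count to the unlabeled statistic on Dyck paths and then invoke the Motzkin marginal stated just before Proposition~\ref{prop:sequence-three-statistics}.

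First, restrict to the diagonal. If $P\in\F_{n,n}$, then only the case $h_n=1$ of Theorem~\ref{thm:Dyck-polyomino-bijection} occurs, and $\mathbf a(P)\in\mathcal A_{n-1,0}$. With total label $0$, every up-step label vanishes. Hence each potential level $UU\sigma DD$ has its second up step labeled $0$, so $\lv(\pi,\lambda)=\operatorname{pot}(\pi)$.

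Second, compose the maps. Composing $P\mapsto \mathbf a(P)$ with $\Phi$ gives a bijection between $\F_{n,n}$ and $\mathcal D_{n-1}$. Under it, $\lev(P)=\lev(\mathbf a(P))=\lv(\pi,\lambda)=\operatorname{pot}(\pi)$, where the first equality was justified in Subsection~\ref{subsec:levels} (the omitted last column cannot form a level). So the count we want is the number of Dyck paths of semilength $\ell=n-1$ with exactly $t$ potential factors, namely
\[
\sum_{k,j\geq1}R(n-1,k,j,t).
\]

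Third, evaluate this sum using the displayed Motzkin marginal (from Donaghey--Shapiro via the plane-tree bijection). For $0\leq e\leq \ell-1$ it equals $\binom{\ell-1}{e}M_{\ell-e-1}$. With $\ell=n-1$ and $e=t$ this gives $\binom{n-2}{t}M_{n-2-t}$ for $0\le t\le n-2$.

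Fourth, the vanishing for $t>n-2$ follows because a path of semilength $n-1$ has at most $n-2$ potential factors. Each factor $UU\sigma DD$ is identified by its second up step, which is a non-initial up step lying inside some arch. Equivalently, a plane tree with $n-1$ edges has at most $n-2$ non-root vertices of outdegree $1$, since some vertex must be a leaf. The case $t=0$ is immediate by specialization, since $\binom{n-2}{0}=1$.

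The only delicate point is the reliance on the tree translation and the bush-reinsertion count. If I wanted to avoid citing it, I would verify the marginal directly, either by summing the Kirkpatrick--Onyeze formula or by the reinsertion argument already sketched in the text. I would treat that as given, since it is stated earlier.
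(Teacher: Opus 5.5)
Your proposal is correct and follows essentially the paper's route. On the diagonal all labels vanish, so $\lev(P)=\operatorname{pot}(\pi)$ for a Dyck path $\pi$ of semilength $n-1$, and summing $R(n-1,k,j,t)$ over peaks and returns with the Donaghey--Shapiro Motzkin marginal gives $\binom{n-2}{t}M_{n-2-t}$; your explicit bound $\operatorname{pot}(\pi)\le n-2$ for the vanishing case spells out what the paper leaves implicit in the range $0\le e\le \ell-1$ of that marginal.
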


\section{Concluding remarks}\label{sec:concluding}

The square-lattice model studied here exhibits two complementary
enumerative structures. With respect to area, Fibonacci polyominoes are
naturally governed by the Fibonacci numbers, whereas their enumeration
by perimeter leads to Catalan numbers.
The bijection of Lemma~\ref{lem:decorated-Dyck} explains this transition
and, more generally, transfers natural statistics on column heights to
peaks, returns, and potential factors of Dyck paths. This yields the
Narayana and Motzkin refinements developed in Section~4. It would be interesting to investigate analogous staircase polyominoes
on the triangular and hexagonal lattices. 
Determining which parts of the present framework admit analogues on
these lattices is a natural direction for future work.

\section*{Acknowledgments}
J. L. Ramírez gratefully acknowledges the hospitality of the Laboratoire d'Informatique de Bourgogne (LIB), Université de Bourgogne, where part of this work was carried out. He was also partially supported by Universidad Nacional de Colombia, Project No.~64041. We are grateful to Diego Villamizar for several helpful discussions and comments on this work.

\end{document}